\theoremstyle{plain}
\newtheorem{theorem}{Theorem}
\newtheorem{corollary}{Corollary}
\newtheorem{lemma}{Lemma}
\newtheorem{proposition}{Proposition}
\theoremstyle{definition}
\theoremstyle{remark}
\numberwithin{equation}{section}
\newdimen\plusheight
\def\+{\;\lower\plusheight\hbox{$+$}\;}
\newdimen\minusheight
\def\-{\;\lower\minusheight\hbox{$-$}\;}
\newdimen\cdotsheight
\def\cds{\lower\cdotsheight\hbox{$\cdots$}}
\begin{document}
\title[  points on elliptic curves over a finite prime field
]
 {Some  properties of the distribution of the numbers of
points on elliptic curves over a finite prime field
 }
\author{Saiying He }
\address{Trinity College\\
        300 Summit Street, Hartford, CT 06106-3100}
\email{Saiying.He@trincoll.edu}
\author{J. Mc Laughlin}
\address{Mathematics Department\\
 Trinity College\\
300 Summit Street, Hartford, CT 06106-3100}
\email{james.mclaughlin@trincoll.edu}

\keywords{Elliptic Curves, Finite Fields, Exponential sums}
\subjclass{11G20, 11T23}
\date{August 13th, 2004}
\begin{abstract}
Let $p \geq 5$ be a prime and for $a, b \in \mathbb{F}_{p}$,
let $E_{a,b}$ denote the elliptic curve over
$\mathbb{F}_{p}$ with equation
$y^2=x^3+a\,x + b$. As usual define the trace of Frobenius $a_{p,\,a,\,b}$ by
\begin{equation*}
 \#E_{a,b}(\mathbb{F}_{p}) = p+1 -a_{p,\,a,\,b}.
\end{equation*}
We use elementary facts about exponential sums and known results about binary quadratic forms over finite fields to evaluate the
sums $\sum_{t\in\mathbb{F}_{p}} a_{p,\, t,\, b}$, $\sum _{t \in \mathbb{F}_{p}} a_{p,\,a,\, t}$,
 $ \sum_{t=0}^{p-1}a_{p,\,t,\,b}^{2}$,
$ \sum_{t=0}^{p-1}a_{p,\,a,\,t}^{2}$ and $ \sum_{t=0}^{p-1}a_{p,\,t,\,b}^{3}$
for primes $p$ in various congruence classes.

As an example of our results, we prove the following:
Let $p \equiv 5$ $($mod 6$)$ be prime and let $b \in \mathbb{F}_{p}^{*}$. Then
\begin{equation*}
\sum_{t=0}^{p-1}a_{p,\,t,\,b}^{3}=
-p\left((p-2)\left(\frac{-2}{p}\right)
+2p\right)\left(\frac{b}{p}\right).
\end{equation*}

\end{abstract}

\maketitle

\section{Introduction}
Let $p \geq 5$ be a prime and let $\mathbb{F}_{p}$ be the finite field
of $p$  elements. For $a, b \in \mathbb{F}_{p}$,
let $E_{a,b}$ denote the elliptic curve over
$\mathbb{F}_{p}$ with equation
$y^2=x^3+a\,x + b$.
Denote by $E_{a,\,b}(\mathbb{F}_{p})$ the set of $\mathbb{F}_{p}$-- rational points
on the curve $E_{a,\,b}$
and define the trace of Frobenius, $a_{p}$, by the equation
\begin{equation*}
 \#E_{a,\,b}(\mathbb{F}_{p}) = p+1 -a_{p}.
\end{equation*}
A simple counting argument makes it clear that
\begin{equation}\label{apeq}
a_{p}=- \sum_{x \in \mathbb{F}_{p}}
\left (
\frac{x^{3}+a\,x+b}{p}
\right),
\end{equation}
where $\left ( \frac{.}{p} \right )$ denotes the Legendre symbol.
We recall some of the arithmetic properties of the distribution of $a_{p}$.
The following theorem is due to Hasse \cite{H33}:
\begin{theorem}
The integer $a_{p}$ satisfies
\begin{equation*}
-2\sqrt{p} \leq a_{p} \leq 2 \sqrt{p}.
\end{equation*}
\end{theorem}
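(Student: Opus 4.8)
The plan is to work with the $p$-power Frobenius endomorphism $\phi$ of $E_{a,b}$, viewed over the algebraic closure $\overline{\mathbb{F}}_p$, and to exploit the fact that the degree function on the endomorphism ring is a positive-definite (in particular nonnegative) quadratic form. Write $E = E_{a,b}$, which we take to be nonsingular as the statement presupposes, and let $\phi\colon E \to E$ be the map $(x,y)\mapsto (x^p, y^p)$, extended to fix the point at infinity. The first step is to record two facts: the fixed points of $\phi$ are exactly the $\mathbb{F}_p$-rational points, and the isogeny $1-\phi$ is separable with kernel equal to this fixed-point set, so that
\begin{equation*}
\#E(\mathbb{F}_p) = \#\ker(1-\phi) = \deg(1-\phi).
\end{equation*}
Comparing with the defining relation $\#E(\mathbb{F}_p) = p+1-a_p$ and with the standard value $\deg\phi = p$ gives the two numerical inputs $\deg\phi = p$ and $\deg(1-\phi) = p+1-a_p$.

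Next I would bring in the dual isogeny $\hat\psi$ of an isogeny $\psi$, together with the fundamental identity $\psi\hat\psi = [\deg\psi]$ in $\mathrm{End}(E)$. Since dualization is additive and fixes the integer multiplication maps, for integers $m,n$ one computes
\begin{equation*}
\deg(m\phi - n) = (m\phi-n)(m\hat\phi-n) = m^2\,\phi\hat\phi - mn(\phi+\hat\phi) + n^2 = m^2 p - mn\,a_p + n^2,
\end{equation*}
where I have used $\phi\hat\phi = \deg\phi = p$ and the trace relation $\phi+\hat\phi = a_p$, obtained by expanding $\deg(1-\phi)=(1-\phi)(1-\hat\phi)=1-(\phi+\hat\phi)+p$ and comparing with the value $p+1-a_p$ found above. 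The crucial structural input is that $\deg$ takes only nonnegative values; hence the binary quadratic form $Q(m,n) = m^2 p - a_p\,mn + n^2$ is nonnegative at every integer point $(m,n)$, and therefore, by homogeneity and density of $\mathbb{Q}^2$ in $\mathbb{R}^2$, nonnegative as a real quadratic form.

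Finally, a nonnegative real binary quadratic form has nonpositive discriminant, so $Q(m,n) = p\,m^2 - a_p\,mn + n^2$ forces $a_p^2 - 4p \le 0$, i.e. $-2\sqrt p \le a_p \le 2\sqrt p$, as claimed. The main obstacle is not this last elementary step but the geometric input feeding into it: establishing that $1-\phi$ is separable with $\#\ker(1-\phi)=\deg(1-\phi)$, and, above all, that the degree map is a positive-definite quadratic form on $\mathrm{End}(E)$. The latter rests on the theory of the dual isogeny and the multiplicativity and positivity of degrees, which is exactly the nontrivial content of Hasse's theorem. An alternative route, closer to the exponential-sum spirit of this paper, would be to count points $N_k = \#E(\mathbb{F}_{p^k})$ over all finite extensions, show that $N_k = p^k + 1 - \alpha^k - \bar\alpha^k$ where $\alpha\bar\alpha = p$ are the reciprocal roots of the zeta function, and deduce $|\alpha| = \sqrt p$ from the rationality and functional equation of that zeta function; but verifying $|\alpha|=\sqrt p$ there meets essentially the same core difficulty.
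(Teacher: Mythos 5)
The paper never proves this theorem: it is quoted purely as background, attributed to Hasse with a citation to the original 1933 paper, and no argument for it appears anywhere in the text (nor is the bound actually used in the later proofs). So there is no internal proof to compare against; what you have written is the standard modern argument, essentially Hasse's as streamlined in textbook treatments. Your outline is correct: identifying $\#E(\mathbb{F}_p)$ with $\deg(1-\phi)$ via separability of $1-\phi$ (which holds because $d\phi=0$, so $d(1-\phi)$ is the identity on the tangent space), extracting the trace identity $\phi+\hat\phi=a_p$ by expanding $\deg(1-\phi)=1-(\phi+\hat\phi)+p$, and then using nonnegativity of the quadratic form $Q(m,n)=\deg(m\phi-n)=pm^2-a_p\,mn+n^2$ on $\mathbb{Z}^2$ (with the convention $\deg 0=0$ so the degenerate case is harmless), which extends to $\mathbb{R}^2$ by homogeneity and continuity and forces $a_p^2-4p\le 0$. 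You are also right, and commendably explicit, about where the genuine content sits: the additivity of the dual isogeny and the quadraticity and positivity of the degree map are nontrivial theorems, and deferring them to the standard theory is the appropriate level of detail for a result the paper itself treats as a citation. One peripheral remark: the paper defines $a_{p,a,b}$ by the character sum \eqref{apeq} for all $(a,b)$, including the singular case $4a^3+27b^2=0$, where Hasse's theorem does not directly apply; there the sum has absolute value at most $1$, so the stated bound still holds, but your (correct) assumption of nonsingularity is worth keeping in mind since the paper's later sums range over coefficients that include singular fibers.
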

Since we wish to look at how $a_{p}$ varies as the coefficients
$a$ and $b$ of the elliptic curve vary,
 it is convenient for our purposes to write $a_{p}$ for the elliptic curve
$E_{a,b}$ as $a_{p,\,a,\,b}$.
The following result  is well known
(an easy consequence of the remarks on page 36 of
\cite{BSS99}, for example).
\begin{proposition}
Let the function $f: \mathbb{Z} \to \mathbb{N}_{0} $ be  defined  by setting
\begin{equation}\label{feq}
f(k) = \# \{(a,b) \in  \mathbb{F}_{p}^{*} \times  \mathbb{F}_{p}^{*} :
 a_{p,\,a,\,b} = k  \}.
\end{equation}
Then for each integer $k$,
\begin{equation*}
\frac{p-1}{2} \bigg | f(k).
\end{equation*}
\end{proposition}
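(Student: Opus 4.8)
The plan is to exhibit $f(k)$ as a disjoint union of orbits of a natural action of $\mathbb{F}_{p}^{*}$ on $\mathbb{F}_{p}^{*} \times \mathbb{F}_{p}^{*}$, all of which turn out to have the same size $(p-1)/2$. The starting observation is that $a_{p,\,a,\,b}$ depends only on the $\mathbb{F}_{p}$--isomorphism class of $E_{a,b}$, since isomorphic curves have the same number of rational points and hence the same trace of Frobenius. For curves in short Weierstrass form over a field of characteristic $\neq 2,3$, the admissible changes of variable are exactly $(x,y) \mapsto (u^{2}x, u^{3}y)$ with $u \in \mathbb{F}_{p}^{*}$, and such a substitution carries $E_{a,b}$ to $E_{u^{4}a,\,u^{6}b}$. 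This is precisely the isomorphism criterion contained in the remarks of \cite{BSS99} cited above.

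First I would define the action of $\mathbb{F}_{p}^{*}$ on $\mathbb{F}_{p}^{*} \times \mathbb{F}_{p}^{*}$ by $u \cdot (a,b) = (u^{4}a, u^{6}b)$; this is well defined on the stated domain because $a$, $b$ and $u$ are all nonzero, so the image has both coordinates nonzero as well. By the previous paragraph $a_{p,\,a,\,b}$ is constant along each orbit, and therefore the fibre $\{(a,b) : a_{p,\,a,\,b}=k\}$ counted by $f(k)$ is a union of complete orbits, that is, a disjoint union of orbits.

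Next I would compute the stabilizer of an arbitrary $(a,b)$ with $a,b \neq 0$. The defining conditions $u^{4}a=a$ and $u^{6}b=b$ force $u^{4}=1$ and $u^{6}=1$, whence $u^{\gcd(4,6)}=u^{2}=1$, i.e.\ $u = \pm 1$; since $p$ is odd these two values are distinct, so the stabilizer has order exactly $2$. By the orbit--stabilizer theorem every orbit then has size $|\mathbb{F}_{p}^{*}|/2 = (p-1)/2$. Consequently $f(k)$ is a sum of terms each equal to $(p-1)/2$, and so $\frac{p-1}{2} \mid f(k)$, as claimed.

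The one point requiring care, and the reason the proposition restricts to $\mathbb{F}_{p}^{*} \times \mathbb{F}_{p}^{*}$ rather than all of $\mathbb{F}_{p} \times \mathbb{F}_{p}$, is the stabilizer computation: the clean conclusion $u^{2}=1$ uses crucially that \emph{both} $a$ and $b$ are nonzero. If $a=0$ the stabilizer would consist of the sixth roots of unity, and if $b=0$ of the fourth roots of unity; in either degenerate case the stabilizer can be larger and the orbit correspondingly shorter. Only when both coefficients are nonzero does every orbit have the uniform length $(p-1)/2$ needed for the divisibility to hold for every integer $k$. I therefore expect this orbit-length uniformity, rather than any of the surrounding bookkeeping, to be the crux of the argument.
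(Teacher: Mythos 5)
Your proof is correct and is essentially the argument the paper has in mind: the paper offers no proof of its own, only a pointer to the remarks in \cite{BSS99} on isomorphisms of short Weierstrass forms, which is exactly the action $u\cdot(a,b)=(u^{4}a,u^{6}b)$ with stabilizer $\{\pm 1\}$ that you use. The only caveat worth a sentence is that pairs with $4a^{3}+27b^{2}=0$ (possible even with $a,b\neq 0$) do not give genuine elliptic curves, so rather than invoking ``isomorphic curves have the same point count'' you should note that the invariance $a_{p,\,a,\,b}=a_{p,\,u^{4}a,\,u^{6}b}$ follows directly from the character sum \eqref{apeq} via the substitution $x\mapsto u^{2}x$ and $\left(\frac{u^{6}}{p}\right)=1$, which covers the singular case as well.
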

The following result can be found in \cite{B67} (page 57).
\begin{proposition}
Define the function $f_{1}: \mathbb{Z} \to \mathbb{N}_{0} $  by
setting
\begin{equation}\label{feqa}
f_{1}(k) = \# \{(a,b) \in  \mathbb{F}_{p} \times  \mathbb{F}_{p}\setminus \{(0,0)\} :
 a_{p,\,a,\,b}=k \}.
\end{equation}
Then for each integer $k$,
\begin{equation*}
f_{1}(k)=f_{1}(-k).
\end{equation*}
\end{proposition}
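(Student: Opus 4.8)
The plan is to exhibit an explicit trace-reversing bijection on the punctured parameter space $\mathbb{F}_{p} \times \mathbb{F}_{p} \setminus \{(0,0)\}$, namely the one induced by a quadratic twist. Since $p \geq 5$, I may fix a quadratic non-residue $d \in \mathbb{F}_{p}^{*}$, so that $\left(\frac{d}{p}\right) = -1$, and define the map
\begin{equation*}
\phi: (a,b) \longmapsto (d^{2} a,\; d^{3} b).
\end{equation*}
Because $d \neq 0$, the map $\phi$ is a bijection of $\mathbb{F}_{p} \times \mathbb{F}_{p}$ onto itself, with inverse $(a,b) \mapsto (d^{-2}a,\, d^{-3}b)$, and it fixes the origin; hence $\phi$ restricts to a bijection of $\mathbb{F}_{p} \times \mathbb{F}_{p} \setminus \{(0,0)\}$.

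The crux is to compute how $\phi$ affects the trace. Starting from the defining formula \eqref{apeq}, I would write
\begin{equation*}
a_{p,\,d^{2} a,\,d^{3} b}=- \sum_{x \in \mathbb{F}_{p}} \left(\frac{x^{3}+d^{2}a\,x+d^{3}b}{p}\right)
\end{equation*}
and substitute $x = d\,u$, a bijective change of summation variable since $d \neq 0$. The cubic becomes $d^{3}(u^{3}+a\,u+b)$, so each Legendre symbol factors as $\left(\frac{d^{3}}{p}\right)\left(\frac{u^{3}+au+b}{p}\right)$. Using $\left(\frac{d^{3}}{p}\right) = \left(\frac{d}{p}\right) = -1$ and comparing with \eqref{apeq} yields the key identity $a_{p,\,d^{2} a,\,d^{3} b} = -\,a_{p,\,a,\,b}$.

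With this identity in hand the proposition follows immediately. If $(a,b) \neq (0,0)$ satisfies $a_{p,\,a,\,b} = k$, then $\phi(a,b)$ satisfies $a_{p,\,\phi(a,b)} = -k$, so $\phi$ maps the set counted by $f_{1}(k)$ injectively into the set counted by $f_{1}(-k)$, giving $f_{1}(k) \leq f_{1}(-k)$. Applying the same argument with $-k$ in place of $k$ (or invoking $\phi^{-1}$, which has the same twisting form since $d^{-1}$ is also a non-residue) yields the reverse inequality, and hence $f_{1}(k) = f_{1}(-k)$.

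The only real obstacle is the bookkeeping in the substitution step: ensuring the change of variable is a genuine bijection of $\mathbb{F}_{p}$ and that the factor $\left(\frac{d^{3}}{p}\right)$ collapses correctly to $-1$; everything else is formal. I should also note that no nonsingularity hypothesis is needed, since $a_{p,\,a,\,b}$ is defined by the exponential sum \eqref{apeq} for every pair $(a,b)$, so the argument applies uniformly, including to the degenerate parameter values counted by $f_{1}$.
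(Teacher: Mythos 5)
Your proof is correct. The paper itself gives no argument for this proposition --- it simply cites Birch \cite{B67}, p.~57 --- so there is nothing to compare line by line; but the key identity you derive, $a_{p,\,d^{2}a,\,d^{3}b}=-a_{p,\,a,\,b}$ for a quadratic non-residue $d$, is precisely the paper's own Proposition~3 (also stated without proof, with a citation to \cite{BSS99}), and your substitution $x=du$ together with $\left(\frac{d^{3}}{p}\right)=\left(\frac{d}{p}\right)=-1$ is the standard way to establish it. Your attention to the degenerate pairs is well placed and handled correctly: with $a_{p,\,a,\,b}$ read off from the character sum \eqref{apeq} the identity holds for all $(a,b)$, and in any case $\phi$ preserves the discriminant up to the factor $d^{6}$, so it respects the singular locus. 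The bijection argument then closes the proof cleanly; this is almost certainly the same mechanism underlying Birch's original claim, so your write-up simply makes explicit what the paper leaves to a reference.
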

The following  result
is also known (\cite{BSS99}, page 37, for example).
\begin{proposition}
Let $v$ be a quadratic non-residue modulo $p$. Then
\begin{equation*}
a_{p,\,a,\,b}= -a_{p,\,v^{2}a,\,v^{3}b}.
\end{equation*}
\end{proposition}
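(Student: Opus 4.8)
The plan is to work directly from the character-sum expression \eqref{apeq} for the trace of Frobenius and perform a linear change of the summation variable that transforms the curve $E_{v^{2}a,\,v^{3}b}$ back into $E_{a,b}$, at the cost of a scalar factor inside the Legendre symbol. Concretely, I would begin with
\begin{equation*}
a_{p,\,v^{2}a,\,v^{3}b}=-\sum_{x\in\mathbb{F}_{p}}\left(\frac{x^{3}+v^{2}a\,x+v^{3}b}{p}\right)
\end{equation*}
and substitute $x=v\,u$. Since $v\neq 0$, the map $u\mapsto v\,u$ is a bijection of $\mathbb{F}_{p}$, so the range of summation is unaffected and no boundary issues arise.

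The second step is to factor the resulting cubic. After the substitution one has $x^{3}+v^{2}a\,x+v^{3}b=v^{3}u^{3}+v^{3}a\,u+v^{3}b=v^{3}\,(u^{3}+a\,u+b)$, so the complete multiplicativity of the Legendre symbol yields
\begin{equation*}
a_{p,\,v^{2}a,\,v^{3}b}=-\left(\frac{v^{3}}{p}\right)\sum_{u\in\mathbb{F}_{p}}\left(\frac{u^{3}+a\,u+b}{p}\right).
\end{equation*}
The point of choosing the twist parameters as $v^{2}$ and $v^{3}$ is precisely that all three terms of the cubic pick up the common factor $v^{3}$, which is what allows the sum to collapse back to the one defining $a_{p,\,a,\,b}$.

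The final step is the arithmetic observation that $\left(\frac{v^{3}}{p}\right)=\left(\frac{v}{p}\right)^{3}=\left(\frac{v}{p}\right)$, because the Legendre symbol takes values in $\{\pm 1\}$, together with $\left(\frac{v}{p}\right)=-1$ since $v$ is a quadratic non-residue. Inserting this factor of $-1$ and comparing with \eqref{apeq} gives $a_{p,\,v^{2}a,\,v^{3}b}=-a_{p,\,a,\,b}$, which is the asserted identity after transposing the sign. I do not anticipate any genuine obstacle: the argument is a one-line substitution, and the only matters needing care are verifying that $u\mapsto v\,u$ is a bijection (guaranteed by $v\neq 0$) and tracking the sign correctly, the latter depending entirely on the non-residue hypothesis forcing $\left(\frac{v}{p}\right)=-1$.
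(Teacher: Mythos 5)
Your proof is correct: the substitution $x=vu$ turns the cubic into $v^{3}(u^{3}+au+b)$, the factor $\left(\frac{v^{3}}{p}\right)=\left(\frac{v}{p}\right)=-1$ comes out of the Legendre symbol, and the signs work out exactly as you trace them. The paper does not prove this proposition itself (it only cites Blake--Seroussi--Smart), and your one-line quadratic-twist substitution is precisely the standard argument that reference uses, so nothing further is needed.
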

To better understand the distribution of the $a_{p,\, a, \, b}$
it makes sense to study the moments.
The \emph{j -invariant} of the elliptic curve $E_{a,b}$ is defined by
\[
j= \frac{2^{8}3^{3}a^{3}}{4a^{3}+27b^{2}},
\]
provided $4a^{3}+27b^{2} \not = 0$. Michel showed in \cite{M95} that if
$\{E_{a(t),\, b(t)}: t \in \mathbb{F}_{p}\}$ is a one-parameter family of
elliptic curves with $a(t)$ and $b(t)$ polynomials in $t$ such that
\[
j(t) := \frac{2^{8}3^{3}a(t)^{3}}{4a(t)^{3}+27b(t)^{2}},
\]
is non-constant, then
\[
\sum_{t   \in \mathbb{F}_{p}}a_{p,\,a(t),\,b(t)}^{2} = p^{2}+O(p^{3/2}).
\]
In \cite{B67} Birch defined
\[
S_{R}(p)= \sum_{a,\,b=0}^{p-1}
\left [ \sum_{x=0}^{p-1} \left ( \frac{x^3-ax-b}{p} \right ) \right ]^{2R}
\]
for integral $R \geq 1$, and proved
\begin{theorem}\label{tbirch}
\footnote{
In \cite{B67}, Birch incorrectly omitted the factor of $p-1$ in his statement
of Theorem \ref{tbirch}.}
For $p \geq 5$,
\begin{align*}
S_{1}(p)&=(p-1)p^2,\\
S_{2}(p)&=(p-1)(2p^3-3p),\\
S_{3}(p)&=(p-1)(5p^4-9p^2-5p),\\
S_{4}(p)&=(p-1)(14p^5-28p^3-20p^2-7p),\\
S_{5}(p)&=(p-1)( 42p^6-90p^4-75p^3-35p^2-9p-\tau (p)),
\end{align*}
where $\tau (p)$ is Ramanujan's $\tau$-function.
\end{theorem}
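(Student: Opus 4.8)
The plan is to pass from Birch's character-sum definition of $S_R(p)$ to a weighted power-moment of the trace of Frobenius taken over all Weierstrass models, and then to evaluate that moment by the Eichler--Selberg trace formula. Writing $\chi$ for the Legendre symbol, the relabeling $(a,b)\mapsto(-a,-b)$ is a bijection of $\mathbb{F}_p\times\mathbb{F}_p$ under which Birch's inner sum becomes $\sum_x\chi(x^3+ax+b)=-a_{p,\,a,\,b}$ by \eqref{apeq}; since $2R$ is even,
\[
S_R(p)=\sum_{a,b=0}^{p-1}a_{p,\,a,\,b}^{2R}=\sum_{k}N(k)\,k^{2R},\qquad N(k):=\#\{(a,b)\in\mathbb{F}_p^2:a_{p,\,a,\,b}=k\}.
\]
Because the exponent $2R$ is even, the supersingular curves (for $p\ge5$ precisely those with $k=0$) contribute nothing, so only the ordinary values $0<|k|<2\sqrt p$ survive; for these $N(k)$ coincides with $f_1(k)$ of \eqref{feqa}, and the symmetries $N(k)=N(-k)$ and $\tfrac{p-1}{2}\mid f(k)$ recorded for \eqref{feq} are exactly the structural features that reappear as the overall factor $p-1$ (whose omission is the subject of the footnote).

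First I would invoke Deuring's theorem, one of the ``known results about binary quadratic forms'': for an ordinary $k$ the number of Weierstrass models with trace $k$ is $N(k)=(p-1)\,H(4p-k^2)$, where $H$ is the Hurwitz--Kronecker class number, the passage from $\mathrm{Aut}$-weighted isomorphism classes to affine models $(a,b)$ contributing the uniform factor $p-1$ (with the weights at $j=0,1728$ matching the special weights in $H$). Hence, up to the contribution of the $O(p)$ singular models on the locus $4a^3+27b^2=0$ (each of which has $a_{p,\,a,\,b}\in\{-1,0,1\}$ and so affects only the lowest-order coefficients),
\[
S_R(p)=(p-1)\sum_{0<|k|<2\sqrt p}H(4p-k^2)\,k^{2R}+(\text{singular corrections}).
\]
I would then expand the monomial in the basis of Eichler polynomials $P_m(k,p)$ determined by $\sum_{m\ge0}P_{m+2}(k,p)X^{m}=(1-kX+pX^{2})^{-1}$, writing $k^{2R}=\sum_{j=0}^{R}c_{R,j}(p)\,P_{2j+2}(k,p)$ with $c_{R,R}=1$, because the $P_m$ are precisely the polynomials occurring in the elliptic term of the trace formula.

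The decisive step is the Eichler--Selberg trace formula, which for the prime $p$ identifies each twisted class-number sum with a Hecke trace,
\[
\sum_{k}H(4p-k^2)\,P_{m}(k,p)=-2\,\mathrm{Tr}\bigl(T_p\mid\mathcal{S}_m(\mathrm{SL}_2(\mathbb{Z}))\bigr)+(\text{elementary terms}),
\]
the elementary terms being explicit polynomials in $p$ together with the hyperbolic divisor term $\sum_{dd'=p}\min(d,d')^{m-1}$. This produces the stated shapes. For $2R\le8$, i.e.\ $m\le10$, every space $\mathcal{S}_4,\mathcal{S}_6,\mathcal{S}_8,\mathcal{S}_{10}$ of cusp forms for $\mathrm{SL}_2(\mathbb{Z})$ is zero, so only the elementary terms remain and $S_R(p)$ is a polynomial in $p$. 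At $R=5$ one reaches $m=12$, where $\dim\mathcal{S}_{12}=1$ with normalized eigenform $\Delta$; its Hecke eigenvalue is exactly $\tau(p)$, which is how Ramanujan's function enters $S_5(p)$ and why it appears there and nowhere earlier. As a sanity check on the top coefficient, the elementary identity $\bigl(\sum_x\chi(x^3-ax-b)\bigr)^{2R}=\sum_{x_1,\dots,x_{2R}}\chi\bigl(\prod_i(x_i^3-ax_i-b)\bigr)$, summed first over $b$ and then $a$, makes the order of magnitude $p^{R+2}$ transparent, the leading constant being the Catalan number $\frac{1}{R+1}\binom{2R}{R}=1,2,5,14,42$ predicted by the semicircle (vertical Sato--Tate) law.

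The hardest part will be the arithmetic bookkeeping rather than any isolated deep idea. One must separate the degenerate and singular contributions (the supersingular $k=0$, the nodal and cuspidal models, and the automorphism-heavy curves at $j=0,1728$) from Deuring's generic count, fix the normalization from isomorphism classes to $(a,b)$-models exactly so as to recover the factor $p-1$, and then collect the identity, hyperbolic and parabolic terms of the trace formula --- together with the change-of-basis coefficients $c_{R,j}(p)$ --- into the precise polynomials $2p^3-3p$, $5p^4-9p^2-5p$, $14p^5-28p^3-20p^2-7p$, and so on. Each of these steps is elementary but unforgiving, which is precisely where the factor-of-$(p-1)$ slip noted in the footnote arises.
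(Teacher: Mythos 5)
The paper does not actually prove this theorem: it is quoted from Birch \cite{B67}, and the text explicitly notes that Birch derived these identities from the Selberg trace formula. Your sketch --- rewrite $S_R(p)$ as $\sum_k N(k)k^{2R}$, use the Deuring correspondence to express $N(k)$ for ordinary traces as $(p-1)$ times a Hurwitz class number (modulo the factor-of-two normalization you would need to pin down), expand $k^{2R}$ in the polynomials $P_m(k,p)$, and apply the Eichler--Selberg trace formula, with the vanishing of the cusp-form spaces of weight $\le 10$ explaining why $S_1,\dots,S_4$ are polynomials in $p$ and $\dim\mathcal{S}_{12}=1$ producing $\tau(p)$ in $S_5$ --- is precisely that route, so it matches the cited proof in approach.
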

Theorem \ref{tbirch} evaluates sums of the form
$ \sum_{a,\,b=0}^{p-1}
a_{p,\,a,\,b}^{2R}$ in terms of $p$ and these results were derived by
Birch as consequences of  the Selberg trace formula .

In this present paper we instead use elementary facts about exponential sums
and known results about binary quadratic forms over finite fields to evaluate the
sums $\sum_{t\in\mathbb{F}_{p}} a_{p,\, t,\, b}$, $\sum _{t \in \mathbb{F}_{p}} a_{p,\,a,\, t}$,
 $ \sum_{t=0}^{p-1}a_{p,\,t,\,b}^{2}$,
$ \sum_{t=0}^{p-1}a_{p,\,a,\,t}^{2}$ and $ \sum_{t=0}^{p-1}a_{p,\,t,\,b}^{3}$,
for primes $p$ in particular congruence classes. In particular,
we prove the following theorems.
\begin{theorem}\label{t4}
Let $p \geq 5$  be a prime, and $a,\,b \in \mathbb{F}_{p}$.  Then
\begin{flushleft}
%\begin{align*}
(i) $\sum_{t\in\mathbb{F}_{p}} a_{p,\, t,\, b}= -p\left (\frac{b}{p} \right )$,\\
(ii)$\sum _{t \in \mathbb{F}_{p}} a_{p,\,a,\, t}= 0.$
%\end{align*}
\end{flushleft}
\end{theorem}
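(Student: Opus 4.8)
The plan is to start from the character-sum formula \eqref{apeq} for $a_{p,\,a,\,b}$ and interchange the order of the two finite summations. In each part the outer variable $t$ replaces one of the coefficients, so after substituting \eqref{apeq} and swapping the sums, both statements reduce to evaluating, for each fixed $x \in \mathbb{F}_p$, an inner sum of Legendre symbols as $t$ ranges over $\mathbb{F}_p$. The one elementary fact I will use throughout is that $\sum_{u \in \mathbb{F}_p}\left(\frac{u}{p}\right)=0$, which holds because there are exactly $(p-1)/2$ nonzero squares and $(p-1)/2$ nonsquares and the symbol vanishes at $0$.

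For part (ii), this is immediate. Writing $a_{p,\,a,\,t}=-\sum_{x}\left(\frac{x^{3}+ax+t}{p}\right)$ and interchanging, the inner sum becomes $\sum_{t\in\mathbb{F}_p}\left(\frac{x^{3}+ax+t}{p}\right)$. For every fixed $x$ the map $t\mapsto x^{3}+ax+t$ is a bijection of $\mathbb{F}_p$, so the argument runs over all of $\mathbb{F}_p$ and the inner sum is $0$. Summing over $x$ then gives $0$, with no further work required.

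For part (i), after interchanging I must evaluate $\sum_{t\in\mathbb{F}_p}\left(\frac{x^{3}+tx+b}{p}\right)$ for each fixed $x$, and here the behaviour depends on whether $x=0$. When $x\neq 0$ the map $t\mapsto x^{3}+tx+b$ is again a bijection of $\mathbb{F}_p$, so the inner sum vanishes exactly as before. The single exceptional term is $x=0$: then $x^{3}+tx+b=b$ is constant in $t$, and the inner sum equals $p\left(\frac{b}{p}\right)$. Collecting these contributions and recalling the leading minus sign in \eqref{apeq} yields $-p\left(\frac{b}{p}\right)$.

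The only real subtlety—the single step I would flag as the obstacle, though it is a mild one—is isolating the $x=0$ term in part (i). It is precisely the failure of $t\mapsto tx$ to be a bijection of $\mathbb{F}_p$ when $x=0$ that produces the nonzero answer $-p\left(\frac{b}{p}\right)$, in sharp contrast to the clean cancellation in part (ii), where the constant term $t$ always restores bijectivity. Everything else is a routine interchange of finite sums together with the vanishing of the complete Legendre-symbol sum.
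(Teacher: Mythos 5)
Your proof is correct, but it takes a genuinely different (and more elementary) route than the paper. The paper expands each Legendre symbol via the Gauss-sum identity $\left(\frac{z}{p}\right)=\frac{1}{G_p}\sum_{d=1}^{p-1}\left(\frac{d}{p}\right)e\left(\frac{dz}{p}\right)$ and then collapses the $t$-sum using orthogonality of additive characters: in (i) the sum $\sum_t e\left(\frac{tdx}{p}\right)$ vanishes unless $x=0$, and in (ii) the sum $\sum_t e\left(\frac{dt}{p}\right)$ vanishes outright since $1\le d\le p-1$; it then needs the Gauss-sum identity a second time to reassemble $-p\left(\frac{b}{p}\right)$. You instead keep the Legendre symbols intact, interchange the finite sums, and invoke only the completeness relation $\sum_{u\in\mathbb{F}_p}\left(\frac{u}{p}\right)=0$ together with the observation that $t\mapsto x^3+tx+b$ (respectively $t\mapsto x^3+ax+t$) is a bijection of $\mathbb{F}_p$ precisely when $x\ne 0$ (respectively always). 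The special role of $x=0$ in part (i) emerges identically in both arguments. What your version buys is brevity and self-containedness: it needs no Gauss sums and no normalizing constant $G_p$. What the paper's version buys is uniformity: the same exponential-sum machinery is the engine for Theorems \ref{t1}, \ref{t2} and \ref{t3}, where the multiplicative-character shortcut is no longer available, so the authors deploy it here as a warm-up even though it is overkill for this statement.
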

This result is elementary but we prove it for
the sake of completeness.

\begin{theorem}\label{t1}
Let $p \equiv 5$ $($mod 6$)$ be prime and let $b \in \mathbb{F}_{p}^{*}$. Then
\begin{equation}\label{t1eq}
\sum_{t=0}^{p-1}a_{p,\,t,\,b}^{2}=
p\left(p-1 - \left (\frac{-1}{p} \right ) \right ).
\end{equation}
\end{theorem}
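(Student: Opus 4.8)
The plan is to start from the exponential-sum expression for $a_{p,\,t,\,b}$, square it, and sum over $t$. By \eqref{apeq},
\[
a_{p,\,t,\,b}^{2}=\sum_{x,\,y\in\mathbb{F}_{p}}
\left(\frac{x^{3}+tx+b}{p}\right)\left(\frac{y^{3}+ty+b}{p}\right).
\]
Summing over $t\in\mathbb{F}_{p}$ and interchanging the order of summation, I would bring the sum over $t$ inside, so that the inner object becomes
\[
\sum_{t\in\mathbb{F}_{p}}\left(\frac{(x^{3}+tx+b)(y^{3}+ty+b)}{p}\right),
\]
which for each fixed pair $(x,y)$ is a Legendre-symbol sum of a quadratic polynomial in $t$. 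The whole problem then reduces to evaluating, for each $(x,y)$, a sum of the form $\sum_{t}\left(\frac{\alpha t^{2}+\beta t+\gamma}{p}\right)$ with $\alpha=xy$, and this is a completely standard computation: such a sum equals $(p-1)\left(\frac{\alpha}{p}\right)$ when the discriminant $\beta^{2}-4\alpha\gamma\equiv 0$, and equals $-\left(\frac{\alpha}{p}\right)$ otherwise, with a separate degenerate case when $\alpha=0$ (i.e. $x=0$ or $y=0$), where the inner sum is linear in $t$ and vanishes unless it is constant.

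The key steps, in order, are: (1) expand the square and swap summation so the $t$-sum is innermost; (2) split off the degenerate locus $xy=0$, on which the inner polynomial in $t$ is at most linear — here I expect each such term to contribute $0$ generically and I would handle the finitely many exceptional $(x,y)$ directly; (3) on the main locus $xy\neq 0$, apply the standard quadratic-sum formula, so that the contribution of $(x,y)$ is essentially $(p-1)\left(\frac{xy}{p}\right)$ precisely when the discriminant in $t$ vanishes and $-\left(\frac{xy}{p}\right)$ otherwise; (4) identify the discriminant-zero condition as a relation between $x$ and $y$ and count/weight the resulting pairs. The discriminant in $t$ is $(x^{3}+b)(y^{3}+b)\cdot 4xy$ up to the usual rearrangement — more precisely it is the resultant-type expression $\bigl((x^{3}+b)y-(y^{3}+b)x\bigr)^{2}$ type quantity — and recognizing it as a perfect square that vanishes exactly along $x=y$ (and possibly a few other loci) is what makes the count tractable.

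The hypothesis $p\equiv 5\pmod 6$ is what I expect to do the real work in step (4): since $p\equiv 2\pmod 3$, the cubing map $x\mapsto x^{3}$ is a bijection on $\mathbb{F}_{p}$, so $x^{3}+b$ and $x^{3}$ range over all residues and there are no nontrivial cube roots of unity to create extra solutions of the discriminant condition. This should collapse the discriminant-vanishing locus to something like the diagonal $x=y$ together with controllable corrections, letting me sum $\left(\frac{xy}{p}\right)$ over the surviving set cleanly. I would then reassemble: the main-locus contribution splits into a "discriminant zero" part proportional to $(p-1)$ and a complementary part proportional to $-1$, and summing $\left(\frac{xy}{p}\right)=\left(\frac{x^{2}}{p}\right)=1$ along the diagonal produces the leading $p(p-1)$ while the off-diagonal Legendre sums telescope down to the $-p\left(\frac{-1}{p}\right)$ correction.

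The main obstacle will be step (4): correctly identifying the discriminant in $t$, factoring it, and using the bijectivity of cubing (the $p\equiv5\pmod6$ input) to pin down exactly which pairs $(x,y)$ make it a square, a zero, or a nonzero nonsquare. Everything else is bookkeeping with the quadratic Gauss-type sum formula, but the interplay between the Legendre symbol $\left(\frac{xy}{p}\right)$ and the discriminant locus — in particular checking that the off-diagonal contributions cancel down to the single term $-\left(\frac{-1}{p}\right)$ rather than leaving residual Legendre sums — is where I expect the argument to require genuine care, and where the congruence condition on $p$ is indispensable.
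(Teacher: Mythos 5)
Your plan is sound and it is a genuinely different route from the paper's. The paper converts every Legendre symbol into a Gauss sum via \eqref{LeGaueq}, sums over $t$ to force $d_{1}x_{1}+d_{2}x_{2}\equiv 0$, and then uses the bijectivity of cubing to collapse the inner sum over $x_{2}$ into a linear exponential sum that forces $d_{1}^{2}\equiv d_{2}^{2}$; the whole computation then reduces to two one-variable Gauss sums. You instead keep the multiplicative characters, make the $t$-sum innermost, and invoke the standard evaluation of $\sum_{t}\left(\frac{\alpha t^{2}+\beta t+\gamma}{p}\right)$. That works: with $\alpha=xy$, $\beta=x(y^{3}+b)+y(x^{3}+b)$, $\gamma=(x^{3}+b)(y^{3}+b)$, the discriminant is $\bigl(x(y^{3}+b)-y(x^{3}+b)\bigr)^{2}=(x-y)^{2}\bigl(b-xy(x+y)\bigr)^{2}$, so it vanishes exactly on the diagonal $x=y$ and on the cubic curve $xy(x+y)=b$. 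One caution: that second branch is not ``a few other loci'' or a small correction --- it has roughly $p$ points and carries the entire $-p\left(\frac{-1}{p}\right)$ term. Concretely, the degenerate locus $xy=0$ contributes $p$ (from $x=y=0$ only), the diagonal contributes $p(p-1)$, and everything hinges on the character sum
\begin{equation*}
T:=\sum_{\substack{x,y\neq 0\\ xy(x+y)=b}}\left(\frac{xy}{p}\right)=-\left(\frac{-1}{p}\right),
\end{equation*}
which you have not evaluated but which is doable by exactly the mechanism you anticipate: set $s=x+y$, $u=xy=b/s$, count pairs via $1+\left(\frac{s^{2}-4u}{p}\right)$, and reduce to $\left(\frac{b}{p}\right)\sum_{s\neq 0}\left(\frac{s^{3}-4b}{p}\right)=-\left(\frac{-1}{p}\right)$ using the cubing bijection for $p\equiv 5\pmod 6$. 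So your approach buys a proof with no Gauss sums at all (only the elementary quadratic character-sum formula), at the cost of an extra character sum over a cubic curve; the paper's approach avoids that curve entirely but leans on the Gauss-sum machinery. Both uses of $p\equiv 5\pmod 6$ are the same bijectivity of $x\mapsto x^{3}$, just deployed at different stages.
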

\begin{theorem}\label{t2}
Let $p \geq 5$ be prime and let $a \in \mathbb{F}_{p}^{*}$.
Then
{\allowdisplaybreaks
\begin{equation}\label{t2eq}
\sum_{t=0}^{p-1}a_{p,\,a,\,t}^{2}
=p\left (p-1-\left (\frac{-3}{p} \right)-\left(\frac{-3a}{p} \right ) \right).
\end{equation}
}
\end{theorem}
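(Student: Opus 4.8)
The plan is to expand the square via \eqref{apeq}, interchange the order of summation, and reduce everything to counting the zeros of a binary quadratic form. Squaring $a_{p,\,a,\,t}=-\sum_{x}\left(\frac{x^{3}+ax+t}{p}\right)$ and summing over $t$ gives
\[
\sum_{t=0}^{p-1}a_{p,\,a,\,t}^{2}
=\sum_{x,\,y\in\mathbb{F}_{p}}\sum_{t=0}^{p-1}
\left(\frac{(x^{3}+ax+t)(y^{3}+ay+t)}{p}\right).
\]
For fixed $x,y$ put $u=x^{3}+ax$ and $v=y^{3}+ay$; the inner sum is $\sum_{t}\left(\frac{(t+u)(t+v)}{p}\right)$, a quadratic character sum in $t$ of discriminant $(u-v)^{2}$. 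I would quote the standard evaluation of $\sum_{t}\left(\frac{t^{2}+Bt+C}{p}\right)$ (leading coefficient $1$): the sum equals $p-1$ when the quadratic is a perfect square, i.e.\ when $u=v$, and equals $-1$ otherwise. The case $u=v$ is immediate since $\left(\frac{(t+u)^{2}}{p}\right)=1$ except at $t=-u$, and the case $u\ne v$ follows from the usual shift $t\mapsto t-u$ reducing it to $\sum_{s\ne1}\left(\frac{s}{p}\right)=-1$.

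Let $N$ denote the number of pairs $(x,y)\in\mathbb{F}_{p}^{2}$ with $u=v$, that is, with $x^{3}+ax=y^{3}+ay$. Separating the two cases above, the whole sum collapses to
\[
\sum_{t=0}^{p-1}a_{p,\,a,\,t}^{2}
=N(p-1)+\bigl(p^{2}-N\bigr)(-1)=p\,(N-p),
\]
so the entire problem is now to show $N=2p-1-\left(\frac{-3}{p}\right)-\left(\frac{-3a}{p}\right)$, which yields exactly $N-p=p-1-\left(\frac{-3}{p}\right)-\left(\frac{-3a}{p}\right)$ and hence \eqref{t2eq}.

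To compute $N$ I would factor $x^{3}+ax-(y^{3}+ay)=(x-y)(x^{2}+xy+y^{2}+a)$ and use inclusion–exclusion. The diagonal $x=y$ contributes $p$ pairs. The conic $x^{2}+xy+y^{2}+a=0$ I would count by completing the square: since $p\ge5$, the substitution $X=2x+y$, $Y=y$ is invertible and sends $x^{2}+xy+y^{2}=-a$ to $X^{2}+3Y^{2}=-4a$. Because $a\ne0$ the right-hand side is nonzero, so the known formula for the number of representations by a nondegenerate binary quadratic form gives $p-\left(\frac{-3}{p}\right)$ solutions. Finally the intersection (both factors vanishing) forces $x=y$ and $3x^{2}+a=0$, i.e.\ $x^{2}=-a/3$; as $-a/3=-3a\cdot 3^{-2}$ differs from $-3a$ by a square, this has $1+\left(\frac{-3a}{p}\right)$ solutions. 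Combining,
\[
N=p+\Bigl(p-\left(\tfrac{-3}{p}\right)\Bigr)-\Bigl(1+\left(\tfrac{-3a}{p}\right)\Bigr)
=2p-1-\left(\tfrac{-3}{p}\right)-\left(\tfrac{-3a}{p}\right),
\]
as required.

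The main obstacle is the conic count: correctly reducing $x^{2}+xy+y^{2}=-a$ to a diagonal form and invoking the right representation-number formula, while keeping track of the degenerate possibilities. Here the hypothesis $a\in\mathbb{F}_{p}^{*}$ is exactly what guarantees the target value $-4a$ is nonzero, so the nondegenerate count $p-\left(\frac{-3}{p}\right)$ applies and the intersection term stays a genuine pair of conditions; I expect the bookkeeping in the inclusion–exclusion (especially the $3^{-2}$ square factor relating $-a/3$ and $-3a$) to be the only delicate point, with the remaining steps being routine once $p\ge5$ ensures $2$ and $3$ are invertible.
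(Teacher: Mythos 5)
Your proposal is correct and follows essentially the same route as the paper: both reduce the problem to counting the solutions of $(x-y)(x^{2}+xy+y^{2}+a)=0$ and perform the identical inclusion--exclusion, obtaining $2p-1-\left(\frac{-3}{p}\right)-\left(\frac{-3a}{p}\right)$ solutions and hence the stated value. The only (harmless) difference is that you evaluate the inner sum over $t$ via the classical formula for $\sum_{t}\left(\frac{(t+u)(t+v)}{p}\right)$ and diagonalize the conic to $X^{2}+3Y^{2}=-4a$, whereas the paper reaches the same dichotomy through Gauss sums and applies the quadratic-form count \eqref{qfeq} directly to the matrix of $x^{2}+xy+y^{2}$.
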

Theorem \ref{t1} and Theorem \ref{t2} could  be deduced from
Theorem \ref{tbirch}, but we believe it is of interest to give elementary
proofs that do not use the Selberg trace formula.

\begin{theorem}\label{t3}
Let $p \equiv 5$ $($mod 6$)$ be prime and let $b \in \mathbb{F}_{p}^{*}$. Then
\begin{equation*}
\sum_{t=0}^{p-1}a_{p,\,t,\,b}^{3}=
-p\left((p-2)\left(\frac{-2}{p}\right)
+2p\right)\left(\frac{b}{p}\right).
\end{equation*}
\end{theorem}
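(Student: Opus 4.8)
The plan is to start from the defining formula \eqref{apeq}, abbreviate $\chi=\left(\tfrac{\cdot}{p}\right)$ for the Legendre symbol, and expand the cube directly, obtaining
\[
\sum_{t=0}^{p-1}a_{p,\,t,\,b}^{3}=-\sum_{x,y,z\in\mathbb{F}_{p}}\ \sum_{t=0}^{p-1}\chi\bigl((x^{3}+tx+b)(y^{3}+ty+b)(z^{3}+tz+b)\bigr).
\]
Since each factor $x^{3}+tx+b$ is \emph{linear} in $t$ with slope $x$, the inner sum over $t$ is a character sum of a polynomial in $t$ whose degree is the number of nonzero entries among $x,y,z$. First I would split the outer sum by that number. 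The all-zero term contributes $p\,\chi(b)$; a term with exactly one nonzero entry contributes $\chi(b^{2})\sum_{t}\chi(\text{linear})=0$; and a term with exactly two nonzero entries contributes $\chi(b)$ times a quadratic character sum, evaluated by $\sum_{t}\chi(\alpha t^{2}+\beta t+\gamma)=-\chi(\alpha)$ when the discriminant is nonzero and $(p-1)\chi(\alpha)$ when it vanishes. The discriminant vanishes exactly when the two linear factors share a root, i.e. when $x=y$ or $xy(x+y)=b$; the sum of $\chi(xy)$ over the latter locus is handled by the substitution $y=xs$, which (using that $u\mapsto u^{3}$ is a bijection because $p\equiv2\pmod 3$) turns it into $\sum_{s\neq0,-1}\chi(s)=-\chi(-1)$. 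This is exactly the mechanism behind Theorem \ref{t1}, and it yields the two-nonzero contribution in closed form.

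The heart of the matter is the case $x,y,z\in\mathbb{F}_{p}^{*}$, where the inner sum is a genuine cubic character sum. Factoring out the slopes gives
\[
\sum_{t}\chi\bigl((x^{3}+tx+b)(y^{3}+ty+b)(z^{3}+tz+b)\bigr)=\chi(xyz)\sum_{t}\chi\bigl((t-t_{x})(t-t_{y})(t-t_{z})\bigr),\qquad t_{x}=-x^{2}-\tfrac{b}{x},
\]
and the key elementary point is that $t_{x}=t_{y}$ precisely when $x=y$ or $xy(x+y)=b$ — the same binary-quadratic-form condition as before. I would therefore classify the triples by the coincidence pattern of $t_{x},t_{y},t_{z}$. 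A triple root gives $\sum_{t}\chi\bigl((t-\tau)^{3}\bigr)=\sum_{t}\chi(t-\tau)=0$; a double root gives $\sum_{t}\chi\bigl((t-\tau)^{2}(t-\rho)\bigr)=-\chi(\tau-\rho)$, which feeds back into two-variable character sums that evaluate, again via $y=xs$-type substitutions and bijectivity of cubing, to elementary expressions in $p$, $\chi(-1)$ and $\chi(-2)$. It is here that the exponent $-2$ enters, through the single point $2x^{3}=b$ at which the two loci $x=y$ and $xy(x+y)=b$ meet.

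A useful technical device for organizing all of this is the fibre identity: for each fixed $x$ one has $x^{3}+tx+b=(t-t_{x})x$, so that
\[
\sum_{x\neq0}\chi(x^{3}+tx+b)=\sum_{\tau\in\mathbb{F}_{p}}\chi(t-\tau)\,\Phi(\tau),\qquad \Phi(\tau)=\sum_{X^{3}+\tau X+b=0}\chi(X).
\]
Substituting this and summing over $t$ rewrites the whole cubic contribution as $\sum_{\tau_{1},\tau_{2},\tau_{3}}\Phi(\tau_{1})\Phi(\tau_{2})\Phi(\tau_{3})\sum_{t}\chi\bigl(\prod_i(t-\tau_{i})\bigr)$, and the coincident-root parts reduce to the computable quantities $\sum_{\tau}\Phi(\tau)^{2}$, $\sum_{\tau}\Phi(\tau)^{3}$ and the twisted cross-sum $\sum_{\tau\neq\rho}\Phi(\tau)^{2}\Phi(\rho)\chi(\tau-\rho)$. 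Each of these counts coincidences of roots of $X^{3}+\tau X+b$, hence reduces to binary-quadratic-form evaluations; for instance $\sum_{\tau}\Phi(\tau)=0$ and $\sum_{\tau}\Phi(\tau)^{2}=p-2-\chi(-1)$, consistent with the value of $Q$ implicit in Theorem \ref{t1}.

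The main obstacle is the remaining configuration of three \emph{distinct} roots, where $\sum_{t}\chi\bigl((t-t_{x})(t-t_{y})(t-t_{z})\bigr)$ is, up to sign, the trace of Frobenius of the auxiliary elliptic curve $Y^{2}=(t-t_{x})(t-t_{y})(t-t_{z})$ and so is not individually elementary. The crux is to show that once weighted by $\chi(xyz)$ (equivalently by $\Phi(\tau_{1})\Phi(\tau_{2})\Phi(\tau_{3})$) and summed over all admissible triples, these traces collapse to an elementary quantity; this is precisely the step where I expect the real difficulty to lie, and where the hypothesis $p\equiv5\pmod 6$ must be used decisively. I would attempt to produce the required cancellation by a twisting involution on the distinct-root configurations combined with the quadratic-form counts above, exploiting that cubing is bijective and that $-1,-2$ are the only quadratic classes that can survive. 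Finally I would assemble the all-zero, two-nonzero, double-root and distinct-root contributions, simplify using $\chi(b^{2})=1$ and $\chi(b^{3})=\chi(b)$, and check the outcome against Theorem \ref{t4} and the twisting relation $a_{p,\,t,\,b}=-a_{p,\,v^{2}t,\,v^{3}b}$, which already forces the answer to be an odd function of $b$ and hence proportional to $\chi(b)$, as the stated formula requires.
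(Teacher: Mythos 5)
Your proposal contains a genuine gap at exactly the point you yourself flag: the configuration of three distinct roots $t_x,t_y,t_z$. There the inner sum $\sum_{t}\chi\bigl((t-t_x)(t-t_y)(t-t_z)\bigr)$ is the trace of Frobenius of an auxiliary elliptic curve with full $2$-torsion, and your argument for why these traces, weighted by $\chi(xyz)$ and summed over all admissible triples, collapse to the elementary quantity $-(p-2)\left(\frac{-2}{p}\right)-2p$ (plus the other contributions) is only a stated intention --- ``I would attempt to produce the required cancellation by a twisting involution'' --- with no involution exhibited and no reason given that the surviving terms are controlled. Since the degenerate configurations (all-zero, one or two nonzero slopes, coincident roots) contribute only $O(p)$ apiece while the claimed total is of size $p^2$, the distinct-root case is not a technicality but the entire content of the theorem; as written, the proof is incomplete.

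For contrast, the paper never meets these elliptic-curve traces because it converts each Legendre symbol to a Gauss sum \emph{before} summing over $t$: the $t$-sum is then the additive orthogonality relation \eqref{legeq}, which merely forces the linear constraint $dx+ey+fz=0$, and the sum over $b$ (after reducing \eqref{t3eq} to the averaged identity \eqref{t3eqa}) likewise reassembles into a Legendre symbol $\left(\frac{d+e+f}{p}\right)$. Every remaining exponential sum is then killed either by orthogonality or by the bijectivity of $x\mapsto x^3$ for $p\equiv 5\pmod 6$, leaving only two-variable Jacobi-type sums (Lemmas \ref{l1}--\ref{l3}) that are evaluated by the quadratic-form count \eqref{qfeq}. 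If you want to salvage your direct decomposition, you would need to supply the missing cancellation argument explicitly --- for instance by re-expanding the distinct-root sum through Gauss sums anyway, at which point you have essentially rederived the paper's route.
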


\section{Proof of the Theorems}
We introduce some standard notation.
Define $e(j/p):=\exp(2 \pi i j/p)$, so that
\begin{equation}\label{legeq}
\sum_{t=0}^{p-1}e\left (\frac{jt}{p} \right)=
\begin{cases}
p, &p\,|j,\\
0,  &(j,p)=1.
\end{cases}
\end{equation}
Define
\begin{equation}\label{Geq}
G_{p}=
\begin{cases}
\sqrt{p}, &p\equiv 1 (\mod 4),\\
i \sqrt{p},  &p\equiv 3 (\mod 4).
\end{cases}
\end{equation}
\begin{lemma}
Let
$\left ( \frac{.}{p} \right )$ denote the Legendre symbol, modulo $p$. Then
\begin{equation}\label{LeGaueq}
\left (
\frac{z}{p}
\right )
=
\frac{1}{G_{P}}
\sum_{d=1}^{p-1}
\left (
\frac{d}{p}
\right )
e\left (
\frac{dz}{p}
\right ).
\end{equation}
\end{lemma}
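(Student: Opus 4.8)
The plan is to reduce the identity to the evaluation of the classical quadratic Gauss sum. Write $g := \sum_{d=1}^{p-1}\left(\frac{d}{p}\right)e\left(\frac{d}{p}\right)$ for this sum, so that proving \eqref{LeGaueq} amounts to showing that the right-hand side equals $\left(\frac{z}{p}\right)g$, together with the fact that $g = G_{p}$. First I would dispose of the degenerate case $p \mid z$: then $\left(\frac{z}{p}\right)=0$, while $e(dz/p)=1$ for every $d$, so the sum on the right collapses to $\sum_{d=1}^{p-1}\left(\frac{d}{p}\right)$, which vanishes because exactly $(p-1)/2$ of the nonzero residues are squares and $(p-1)/2$ are non-squares. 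Hence both sides are $0$.

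For $(z,p)=1$ the key manipulation is the change of variable $d \mapsto d\,z^{-1} \pmod p$. Since $z$ is invertible modulo $p$, as $d$ runs over $\{1,\dots,p-1\}$ so does $d' \equiv dz \pmod p$; rewriting the sum in terms of $d'$ and using the multiplicativity of the Legendre symbol together with $\left(\frac{z^{-1}}{p}\right)=\left(\frac{z}{p}\right)$ gives $\sum_{d=1}^{p-1}\left(\frac{d}{p}\right)e(dz/p)=\left(\frac{z}{p}\right)g$. Dividing by $g=G_{p}$ then yields the stated formula.

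The remaining, and genuinely substantive, ingredient is the identity $g = G_{p}$. The modulus $|g|=\sqrt{p}$ is elementary: computing $g\bar g$ by setting $c=dt$ in the double sum $\sum_{c,d}\left(\frac{cd}{p}\right)e\!\left(\frac{c-d}{p}\right)$ and applying \eqref{legeq} to the inner sum over $d$ reduces $g\bar g$ to $p$. Combined with $\bar g = \left(\frac{-1}{p}\right)g$, obtained from the substitution $d \mapsto -d$, this forces $g$ to be real when $p \equiv 1 \pmod 4$ and purely imaginary when $p \equiv 3 \pmod 4$, so in each case $g = \pm G_{p}$. The hard part is pinning down the correct sign, which is Gauss's classical sign determination and is the only place a genuinely deep fact is required; for the purposes of this lemma I would simply invoke the standard evaluation of the quadratic Gauss sum rather than reproving it.
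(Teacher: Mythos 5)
Your argument is correct. The paper does not actually prove this lemma at all --- it simply cites Berndt--Evans--Williams (Theorems 1.1.5 and 1.5.2), which contain precisely the two ingredients you describe (the change of variable $d \mapsto dz^{-1}$ reducing the claim to the evaluation of the quadratic Gauss sum, and Gauss's sign determination); your write-up carries out the elementary reductions explicitly and, like the paper, defers the one genuinely deep step to the classical result.
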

\begin{proof}
See \cite{BEW99}, Theorem 1.1.5 and Theorem 1.5.2.
\end{proof}
We will occasionally use the fact that if $\mathbb{H}$ is a
subset of $\mathbb{F}_{p}$,
\begin{equation}\label{legsym}
\sum_{d\in \mathbb{F}_{p} \setminus \mathbb{H}}
\left (
\frac{d}{p}
\right )=
-\sum_{d\in  \mathbb{H}}
\left (
\frac{d}{p}
\right ).
\end{equation}
We will also occasionally make use of some implications of the Law of
Quadratic Reciprocity (see \cite{IR90}, page 53, for example).
\begin{theorem}\label{tqr}
Let $p$ and $q$ be odd primes. Then\\
{\allowdisplaybreaks
$($a$)$$ \left (
\frac{-1}{p}
\right )=(-1)^{(p-1)/2}.$\\
$($b$)$ $ \left (
\frac{2}{p}
\right )=(-1)^{(p^{2}-1)/8}$.\\
$($c$)$ $\left (
\frac{p}{q}
\right )
\left (
\frac{q}{p}
\right )=(-1)^{((p-1)/2)((q-1)/2)}$.
}
\end{theorem}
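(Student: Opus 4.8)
The plan is to treat the three assertions separately, using for (b) and (c) the cyclotomic Gauss-sum machinery that is already available here through \eqref{LeGaueq} and the definition \eqref{Geq} of $G_p$. Part (a) is immediate from Euler's criterion: for $\gcd(a,p)=1$ one has $a^{(p-1)/2}\equiv\left(\frac{a}{p}\right)\pmod p$, and taking $a=-1$ gives $\left(\frac{-1}{p}\right)\equiv(-1)^{(p-1)/2}\pmod p$; since both sides lie in $\{\pm1\}$ and $p>2$, the congruence forces equality. I would prove (a) first so that it may be used in (c).

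For (c) I would introduce the quadratic Gauss sum $g=\sum_{t=0}^{p-1}\left(\frac{t}{p}\right)e(t/p)$, with $e(\cdot)$ as in \eqref{legeq}. Specializing \eqref{LeGaueq} at $z=1$ together with \eqref{Geq} yields $g=G_p$, whence $g^{2}=(-1)^{(p-1)/2}p=:p^{*}$. I would then work in the ring $\mathbb{Z}[\zeta_p]$, $\zeta_p=e(1/p)$, and compute $g^{q}$ modulo $q$ in two ways. First, raising the sum to the $q$-th power and reducing modulo $q$ (the prime $q$ annihilates the multinomial cross terms, and $\left(\frac{t}{p}\right)^{q}=\left(\frac{t}{p}\right)$ as $q$ is odd) gives $g^{q}\equiv\sum_t\left(\frac{t}{p}\right)\zeta_p^{tq}=\left(\frac{q}{p}\right)g\pmod q$, the last equality upon reindexing and using $\left(\frac{q}{p}\right)^{2}=1$. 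Second, $g^{q}=g\,(g^{2})^{(q-1)/2}=g\,(p^{*})^{(q-1)/2}\equiv g\left(\frac{p^{*}}{q}\right)\pmod q$ by Euler's criterion. Equating, multiplying through by $g$, and cancelling the factor $p^{*}=g^{2}$ (coprime to $q$ since $p\neq q$) leaves $\left(\frac{p^{*}}{q}\right)\equiv\left(\frac{q}{p}\right)\pmod q$, which is an equality as before. Finally $\left(\frac{p^{*}}{q}\right)=\left(\frac{-1}{q}\right)^{(p-1)/2}\left(\frac{p}{q}\right)=(-1)^{\frac{p-1}{2}\frac{q-1}{2}}\left(\frac{p}{q}\right)$ by part (a) applied to $q$, and rearranging yields (c).

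For (b) I would run the analogous argument in $\mathbb{Z}[\zeta_8]$, $\zeta_8=e^{2\pi i/8}$, setting $\tau=\zeta_8+\zeta_8^{-1}$ so that $\tau^{2}=2$. Reducing $\tau^{p}$ modulo $p$ gives $\tau^{p}\equiv\zeta_8^{p}+\zeta_8^{-p}\pmod p$, and a short case check on $p\bmod 8$ shows $\zeta_8^{p}+\zeta_8^{-p}=(-1)^{(p^{2}-1)/8}\tau$ (the value is $\tau$ for $p\equiv\pm1$ and $-\tau$ for $p\equiv\pm3$). On the other hand $\tau^{p}=\tau\,(\tau^{2})^{(p-1)/2}=\tau\,2^{(p-1)/2}\equiv\tau\left(\frac{2}{p}\right)\pmod p$ by Euler's criterion. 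Equating, multiplying by $\tau$, and cancelling $2$ (a unit modulo the odd prime $p$) gives (b).

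The one genuinely nontrivial input is the value $g^{2}=p^{*}$ for (c); here it is handed to us by \eqref{LeGaueq} and \eqref{Geq}, but if one wished to avoid invoking the Gauss sum one could instead compute $g^{2}=\sum_{s,t}\left(\frac{st}{p}\right)e((s+t)/p)$ directly, which after the substitution $s=tu$ collapses to $(-1)^{(p-1)/2}p$. I expect the main point requiring care to be the rigour of the congruences in $\mathbb{Z}[\zeta_p]$ and $\mathbb{Z}[\zeta_8]$: one must justify the \emph{freshman's dream} reduction modulo the prime $q$ (respectively $p$), verify that each cancelled quantity is coprime to the modulus, and confirm that a congruence between two elements of $\{\pm1\}$ modulo an odd prime forces equality. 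This bookkeeping, rather than any single computation, is where the argument must be written most carefully.
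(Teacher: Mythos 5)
Your proposal is correct, but there is nothing in the paper to compare it against: the paper does not prove Theorem \ref{tqr} at all, quoting it as a standard consequence of quadratic reciprocity with a pointer to \cite{IR90}, page 53. What you have supplied is the classical Gauss-sum proof (essentially the one in the cited reference), and it meshes well with the paper's toolkit, since the paper already works with $G_p$ and the expansion \eqref{LeGaueq}. Three points deserve to be made explicit. First, part (c) requires $p \neq q$ — you use this to cancel $p^{*}$ modulo $q$, and the identity is false (indeed meaningless, as $\left(\frac{p}{q}\right)=0$) when $p=q$; the paper's statement omits this hypothesis, so you should add it. Second, obtaining $g=G_p$ by specializing \eqref{LeGaueq} at $z=1$ silently imports the determination of the \emph{sign} of the quadratic Gauss sum (Theorem 1.5.2 of \cite{BEW99}), which is a deeper fact than reciprocity itself; your parenthetical fallback — the elementary computation $g^{2}=\sum_{s,t}\left(\frac{st}{p}\right)e\left((s+t)/p\right)=(-1)^{(p-1)/2}p$ via the substitution $s=tu$ — is the right primary route, since the argument only ever uses $g^{2}=p^{*}$ and never the sign. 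Third, the bookkeeping you flag is real but routine and is exactly what remains to be written: the freshman's dream holds modulo $q$ in $\mathbb{Z}[\zeta_p]$ because $q$ divides every nonextreme multinomial coefficient; the cancellation of $g$ is legitimised by first multiplying through by $g$ so that one cancels the rational integer $p^{*}$, invertible modulo $q$; and passing from a congruence between elements of $\{\pm 1\}$ to an equality uses $q\mathbb{Z}[\zeta_p]\cap\mathbb{Z}=q\mathbb{Z}$ (so that $2\notin q\mathbb{Z}[\zeta_p]$ for $q$ odd), which holds because $\mathbb{Z}[\zeta_p]$ is a free $\mathbb{Z}$-module with $1$ as part of a basis; the same remarks apply mutatis mutandis to your argument for (b) in $\mathbb{Z}[\zeta_8]$. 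With these details spelled out, your proof is complete and self-contained, which is arguably an improvement over the paper's bare citation.
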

We now prove Theorems  \ref{t4}, \ref{t1}, \ref{t2} and \ref{t3},

\begin{flushleft}
\textbf{Theorem \ref{t4}.}
\emph{Let $p \geq 5$  be a prime, and $a,\,b \in \mathbb{F}_{p}$.  Then}\\
\emph{
%\begin{align*}
$($i$)$ $\sum_{t\in\mathbb{F}_{p}} a_{p,\, t,\, b}= -p\left (\frac{b}{p} \right )$,\\
$($ii$)$ $\sum _{t \in \mathbb{F}_{p}} a_{p,\,a,\, t}= 0.$}
%\end{align*}
\end{flushleft}

\emph{Proof.}
(i) From \eqref{apeq} and  \eqref{LeGaueq}, it follows that
\begin{equation*}
\sum_{t\in\mathbb{F}_{p}} a_{p, t, b}=
-\sum_{x\in\mathbb{F}_{p}}
\sum_{d=1}^{p-1}
\frac{1}{G_{P}}
\left (\frac{d}{p}\right )
e\left ( \frac{d(x^3+b)}{p}\right)
\sum_{t\in\mathbb{F}_{p}}
e\left (\frac{t d x}{p}\right)
\end{equation*}
The inner sum over $t$ is zero unless  $x=0$, in which case
it equals to $p$.  The left side therefore can be simplified to give
\[
\sum_{t\in\mathbb{F}_{p}} a_{p, t, b}=-\sum_{d=1}^{p-1}
\frac{p}{G_{P}}
\left (\frac{d}{p}\right )
e\left(\frac{d b}{p}\right)=-p\left (\frac{b}{p}\right ).
\]
The last equality follows from  \eqref{LeGaueq}.

$($ii$)$:  From \eqref{apeq} and \eqref{LeGaueq}, it follows that
\begin{equation*}
\sum_{t \in \,\mathbb{F}_{p}}a_{p,a,t}=
-\sum_{x\in\mathbb{F}_{p}}
\sum_{d=1}^{p-1}
\frac{1}{G_{p}}
\left (\frac{d}{p} \right )
e\left(\frac{d(x^3+a x)}{p}\right)
\sum_{t\in\mathbb{F}_{p}}
e\left (\frac{d t}{p} \right )
=0.
\end{equation*}
The inner sum over $t$ is equal to $0$, by \eqref{legeq}, since $1 \leq d \leq p-1$.
\begin{flushright}
$\Box$
\end{flushright}
The result at (ii) follows also, in the case of primes
$p \equiv 3$ (mod 4), from the fact that $a_{p,\,a,\, t}= -a_{p,\,a,\, p-t}$.
However, this is not the case for  primes
$p \equiv 1$ (mod 4). For example,
 \[
\{a_{13,\,1,\,t}: 0 \leq t \leq 12\} =\{-6, -4, 2, -1, 0, 5, 1, 1, 5, 0, -1, 2, -4\}.
\]
The results in Theorem \ref{t4} are almost certainly known, although we have
not been able to find a reference.

\textbf{Theorem \ref{t1}.}
\emph{Let $p \equiv 5$ $($mod 6$)$ be prime and let $b \in \mathbb{F}_{p}^{*}$. Then}
\begin{equation*}
\sum_{t=0}^{p-1}a_{p,\,t,\,b}^{2}=
p\left(p-1 - \left (\frac{-1}{p} \right ) \right ).
\end{equation*}

\emph{Proof.}
 From \eqref{apeq} and  \eqref{LeGaueq} it follows that
\begin{multline*}
\sum_{t \in \mathbb{F}_{p}}a_{p,t,b}^{2}
=
\frac{1}{G_{p}^{2}}\sum_{d_{1},d_{2}=1}^{p-1}\left (
\frac{d_{1}d_{2}}{p}
\right)
\sum_{x_{1},x_{2} \in \mathbb{F}_{p}}
e\left (
\frac{d_{1}(x_{1}^{3}+b)+d_{2}(x_{2}^{3}+b)}{p}
\right)\\
\times
\sum_{t \in \mathbb{F}_{p}}
e\left (
\frac{t(d_{1}x_{1}+d_{2}x_{2})}{p}
\right).
\end{multline*}
The inner sum over $t$ is zero, unless
$x_{1} \equiv -d_{1}^{-1}d_{2}x_{2}(\text{mod }p)$, in which case it equals $p$.
Thus
\begin{equation*}
\sum_{t \in \mathbb{F}_{p}}a_{p,t,b}^{2}=
\frac{p}{G_{p}^{2}}\sum_{d_{1},d_{2}=1}^{p-1}
\left (
\frac{d_{1}d_{2}}{p}
\right)e\left (
\frac{b(d_{1}+d_{2})}{p}
\right)
\sum_{x_{2} \in \mathbb{F}_{p}}
e\left (
\frac{d_{1}^{-2}d_{2}x_{2}^{3}(d_{1}^{2}-d_{2}^{2})}{p}
\right).
\end{equation*}
Since the map $x \to x^{3}$ is one-to-one on $F_{p}$, when
$p \equiv 5 (\text{mod }6)$, the $x_{2}^{3}$ in the inner sum can be replaced
by $x_{2}$. Thus the inner sum is zero unless
$d_{2}^{2}-d_{1}^{2}\equiv 0 (\text{mod }p)$, in which case it equals $p$.
It follows that
\begin{align*}
\sum_{t \in \mathbb{F}_{p}}a_{p,t,b}^{2}
&=
\frac{ p^{2} }{ G_{p}^{2} }
\left (
\sum_{d_{1}=1}^{p-1}
\left (
\frac{ d_{1}^{2} }{p}
\right)
e\left (\frac{
b(2d_{1})}{p}
\right)
+
\sum_{ d_{1}=1}^{p-1}
\left (
\frac{ -d_{1}^{2} }{p}
\right)
e\left (
\frac{b( d_{1}-d_{1} )}{p}
\right )
\right )\\
&=
\frac{ p^{2} }{ G_{p}^{2} }
\left (
-1+
\left (
\frac{ -1 }{p}
\right)(p-1)
\right )=
\frac{ p^{2} }{ G_{p}^{2} }
\left (
\frac{ -1 }{p}
\right)
\left (p-1
-
\left (
\frac{ -1 }{p}
\right)
\right ).
\end{align*}
We have once again used \eqref{LeGaueq} to compute the sums, noting that
the sums above start with $d_{1}=1$.
The result now follows since $p/G_{p}^{2} \times (-1|p)=1$ for all primes $p \geq 3$.
\begin{flushright}
$\Box$
\end{flushright}

Remarks: (1) It is clear that the results will remain true if
$a(t)=t$ is replaced by any function $a(t)$ which is one-to-one on $F_{p}$.\\
(2) It is more difficult to
determine the values taken by $\sum_{t \in \mathbb{F}_{p}}a_{p,t,b}^{2}$ for
primes $p\equiv 1 (\text{mod } 6)$. This is principally because the
map $x \to x^{3}$ is not one-to-one on $F_{p}$ for these primes
(so that \eqref{legeq} cannot be used so easily to simplify the summation),
but also because
the answer depends on which coset $b$ belongs to in
$\mathbb{F}_{p}^{*}/\mathbb{F}_{p}^{*3}$.

Before proving the next theorem, it is necessary to recall a result  about
quadratic forms over finite fields. Let $q$ be a power of an odd prime and
let $\eta$ denote the quadratic character on $\mathbb{F}_{q}^{*}$ (so that if $q=p$, an
odd prime, then $\eta (c) = \left ( c /p\right)$, the Legendre symbol).
The function $v$ is defined on $\mathbb{F}_{q}$ by
\begin{equation}\label{veq}
v(b)=
\begin{cases}
-1,& b \in \mathbb{F}_{q}^{*},\\
q-1,& b=0.
\end{cases}
\end{equation}
Suppose
\[
f(x_{1},\dots,x_{n}) = \sum_{i,j=1}^{n}a_{ij}x_{i}x_{j},
\hspace{15pt}
\text{ with } a_{ij}=a_{ji},
\]
is a quadratic form over $\mathbb{F}_{q}$, with associated matrix
$A=(a_{ij})$ and let $\triangle$ denote the determinant of $A$
($f$ is \emph{non-degenerate} if  $\triangle \not = 0$).
\begin{theorem}
Let $f$ be a non-degenerate quadratic form over  $\mathbb{F}_{q}$,
$q$ odd, in an even number $n$ of indeterminates. Then for $b \in
\mathbb{F}_{q}$ the number of solutions of the equation
$f(x_{1},\dots,x_{n}) =b$ in $\mathbb{F}_{q}^{\,n}$ is
\begin{equation}\label{qfeq}
q^{n-1}+v(b)q^{(n-2)/2}\eta \left ( (-1)^{n/2} \triangle \right ).
\end{equation}
\end{theorem}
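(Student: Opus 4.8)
The plan is to prove the formula by first reducing to a diagonal form and then evaluating the solution count with additive characters and the quadratic Gauss sum. I would fix a nontrivial additive character $\psi$ of $\mathbb{F}_{q}$ and write $g=\sum_{c\in\mathbb{F}_{q}}\eta(c)\psi(c)$ for the associated quadratic Gauss sum (with the convention $\eta(0)=0$). Since $q$ is odd, Lagrange diagonalization of symmetric bilinear forms applies, so there is an invertible linear change of variables taking $f$ to $a_{1}x_{1}^{2}+\dots+a_{n}x_{n}^{2}$ with every $a_{i}\neq 0$; this change multiplies $\triangle$ by the square of the determinant of the transformation, so $\eta(\triangle)=\eta(a_{1}\cdots a_{n})$ is unaffected and the number of solutions of $f=b$ is unchanged. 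Hence it suffices to prove the claim for the diagonal form, where $\triangle=a_{1}\cdots a_{n}$.

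Next I would express the solution count $N(b)$ through orthogonality of additive characters:
\[
N(b)=\frac{1}{q}\sum_{y\in\mathbb{F}_{q}}\psi(-yb)\sum_{x\in\mathbb{F}_{q}^{n}}\psi\bigl(y f(x)\bigr).
\]
The term $y=0$ contributes $q^{n-1}$. For $y\neq 0$ the inner sum factors over the coordinates as $\prod_{i=1}^{n}\sum_{x_{i}\in\mathbb{F}_{q}}\psi(y a_{i}x_{i}^{2})$, and the key one-variable identity $\sum_{x\in\mathbb{F}_{q}}\psi(c x^{2})=\eta(c)\,g$ (valid for $c\neq0$, obtained by counting square roots via $1+\eta(u)$ and using $\sum_{u}\psi(cu)=0$) turns each factor into $\eta(ya_{i})g$. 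Multiplying gives $\eta(y)^{n}\eta(a_{1}\cdots a_{n})g^{n}$, and since $n$ is even $\eta(y)^{n}=1$ for every $y\neq 0$, so the inner sum equals $\eta(\triangle)g^{n}$ independently of $y$.

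It then remains to evaluate the outer sum and the Gauss-sum power. The two standard facts $g\overline{g}=q$ and $\overline{g}=\eta(-1)g$ yield $g^{2}=\eta(-1)q$, whence $g^{n}=(g^{2})^{n/2}=\eta(-1)^{n/2}q^{n/2}=\eta\bigl((-1)^{n/2}\bigr)q^{n/2}$. Collecting the terms with $y\neq0$,
\[
N(b)=q^{n-1}+\frac{\eta(\triangle)\,\eta\bigl((-1)^{n/2}\bigr)q^{n/2}}{q}\sum_{y\neq0}\psi(-yb).
\]
Finally $\sum_{y\neq0}\psi(-yb)$ equals $q-1$ when $b=0$ and $-1$ when $b\neq0$, which is precisely $v(b)$ by \eqref{veq}; substituting and simplifying the power of $q$ produces $q^{n-1}+v(b)q^{(n-2)/2}\eta\bigl((-1)^{n/2}\triangle\bigr)$, as claimed.

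The only genuine obstacle is bookkeeping the Gauss-sum identities correctly—establishing $\sum_{x}\psi(cx^{2})=\eta(c)g$ and $g^{2}=\eta(-1)q$, and verifying that $\eta(\triangle)$ survives the diagonalization—after which the parity of $n$ makes the character factors collapse cleanly and the result drops out. An alternative route would be induction on $n$, peeling off one variable at a time and tracking how the count for $f=b$ relates to that for an $(n-1)$-variable form, but the character-sum computation is more transparent and avoids a delicate case analysis on whether $b=0$.
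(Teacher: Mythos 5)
Your proof is correct, and it is worth noting that the paper itself offers no argument for this statement at all: its ``proof'' is simply the citation to Lidl and Niederreiter \cite{LN97}, pp.~282--293, where the result is established by reducing the form to an equivalent canonical one and then counting solutions essentially by induction on the number of variables. What you have written is instead the standard self-contained character-sum proof, and every step checks out: diagonalization is available since $q$ is odd, and the change of variables replaces the matrix $A$ by $C^{T}AC$, multiplying $\triangle$ by $\det(C)^{2}$, so $\eta(\triangle)$ and the solution count are both preserved; the one-variable identity $\sum_{x}\psi(cx^{2})=\eta(c)g$ follows exactly as you say from counting square roots by $1+\eta(u)$ (with the convention $\eta(0)=0$ handling $u=0$); and the evaluation $g^{2}=\eta(-1)q$ is valid for \emph{any} nontrivial additive character $\psi$, not just the canonical one, since replacing $\psi$ by $\psi(a\,\cdot)$ multiplies $g$ by $\eta(a)=\pm1$ and so leaves $g^{2}$ unchanged --- a point worth making explicit since you fixed an arbitrary $\psi$. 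The parity of $n$ then kills the factor $\eta(y)^{n}$, the outer sum $\sum_{y\neq0}\psi(-yb)$ produces exactly the function $v(b)$ of \eqref{veq}, and the powers of $q$ combine to $q^{(n-2)/2}$ as claimed. Compared with the inductive route in \cite{LN97}, your argument buys uniformity (no case analysis on $b=0$ versus $b\neq0$ until the very last line, and no separate treatment of small $n$) at the cost of needing the two Gauss-sum identities; either is a legitimate foundation for the counting formula the paper uses in its proof of Theorem 4.
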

\begin{proof}
See \cite{LN97}, pp 282--293.
\end{proof}

\textbf{Theorem \ref{t2}.}
\emph{Let $p \geq 5$ be prime and let $a \in \mathbb{F}_{p}^{*}$.
Then}
{\allowdisplaybreaks
\begin{equation*}
\sum_{t=0}^{p-1}a_{p,\,a,\,t}^{2}
=p\left (p-1-\left (\frac{-3}{p} \right)-\left(\frac{-3a}{p} \right ) \right).
\end{equation*}
}

\emph{Proof.}
Once again
\eqref{apeq} and  \eqref{LeGaueq} give that
\begin{multline*}
\sum_{t \in \mathbb{F}_{p}}a_{p,\,a,\,t}^{2}
=
\frac{1}{G_{p}^{2}}\sum_{d_{1},d_{2}=1}^{p-1}\left (
\frac{d_{1}d_{2}}{p}
\right)
\sum_{x_{1},x_{2} \in \mathbb{F}_{p}}
e\left (
\frac{d_{1}(x_{1}^{3}+ax_{1})+d_{2}(x_{2}^{3}+a x_{2})}{p}
\right)\\
\times
\sum_{t \in \mathbb{F}_{p}}
e\left (
\frac{t\,b(d_{1}+d_{2})}{p}
\right).
\end{multline*}
The inner sum over $t$ is zero, unless
$d_{1} \equiv -d_{2}(\text{mod }p)$, in which case it equals $p$.
Thus
\begin{align}\label{Gpeq}
\sum_{t \in \mathbb{F}_{p}}a_{p,\,a,\,t}^{2}
&=
\frac{p}{G_{p}^{2}}
\left (
\frac{-1}{p}
\right)
\sum_{x_{1},\,x_{2} \in \mathbb{F}_{p}}
\sum_{d_{1}=1}^{p-1}
e\left (
\frac{d_{1}(x_{1}^{3}+a\,x_{1}-x_{2}^{3}-a\,x_{2})}{p}
\right)\\
&=
\sum_{x_{1},\,x_{2} \in \mathbb{F}_{p}}
\sum_{d_{1}=1}^{p-1}
e\left (
\frac{d_{1}(x_{1}-x_{2})(x_{1}^{2}+x_{1}x_{2}+x_{2}^2+a)}{p}
\right). \notag
\end{align}
We have used the fact that $p/G_{p}^{2} \times (-1|p)=1$ for all
primes $p \geq 3$. The inner sum over $d_{1}$ equals $-1$, unless
one of the factors $x_{1}-x_{2}$, $x_{1}^{2}+x_{1}x_{2}+x_{2}^2+a$
equals $0$, in which case the sum is $p-1$. The equation
$x_{1}=x_{2}$ has $p$ solutions and, by \eqref{qfeq} with $q=p$,
$n=2$, $f(x_{1},x_{2})= x_{1}^{2}+x_{1}x_{2}+x_{2}^{2}$ and $A =
\left (
\begin{smallmatrix}
1& (p+1)/2\\
(p+1)/2&1
\end{smallmatrix}
\right )$, the equation $x_{1}^{2}+x_{1}x_{2}+x_{2}^{2}=-a$ has
\begin{equation*}
p+(-1)
\left (
\frac{-1(1-(p+1)^2/4)}{p}
\right)
= p-
\left (
\frac{-3}{p}
\right)
\end{equation*}
solutions. However, we need to be careful to avoid double counting
and to examine when $x_{1}^{2}+x_{1}x_{2}+x_{2}^{2}=-a$ has a
solution with $x_{1}=x_{2}$. The equation $3x_{1}^{2}=-a$ will
have two solutions if $\left ( \frac{-3a}{p} \right)=1$ and none
if $\left ( \frac{-3a}{p} \right)=-1$. Hence the number of
solutions to the equation $3x_{1}^{2}=-a$ is $\left (
\frac{-3a}{p} \right)+1$. Thus the number of solutions to
$(x_{1}-x_{2})(x_{1}^{2}+x_{1}x_{2}+x_{2}^2+a)=0$ is
{\allowdisplaybreaks
\[
p+ \left( p-
\left (
\frac{-3}{p}
\right)
\right) -
\left(
\left (
\frac{-3a}{p}
\right)+1
\right)
=2p-1-\left (
\frac{-3}{p}
\right)
-\left (
\frac{-3a}{p}
\right).
\]
}
Thus
\begin{multline*}
\sum_{t \in \mathbb{F}_{p}}a_{p,\,a,\,t}^{2}
=\left (2p-1-\left (
\frac{-3}{p}
\right)
-\left (
\frac{-3a}{p}
\right)
\right)(p-1) \\
+\left ( p^2-\left (2p-1-\left (
\frac{-3}{p}
\right)
-\left (
\frac{-3a}{p}
\right)
\right)
\right)(-1).
\end{multline*}
The right side now simplifies to give the result.
\begin{flushright}
$\Box$
\end{flushright}

Before proving Theorem \ref{t3}, we need some preliminary lemmas.

\begin{lemma}\label{l1}
Let  $p \equiv 5$ $($mod 6$)$ be prime. Then
{\allowdisplaybreaks
\begin{multline}\label{l1eq}
\sum_{d, e, f =1}^{p-1}
\left (
\frac{ef(1+e+f)}{p}
\right)\sum_{ y, z \in \mathbb{F}_{p}}
e\left (
\frac{d(-(e y+fz)^{3}+e y^{3}+fz^{3})}{p}
\right)\\
=-p(p-1)
\left (
1+\left (
\frac{-1}{p}
\right)
\right)\\
+\sum_{d, e, f =1}^{p-1}
\left (
\frac{e+ef+f}{p}
\right)
\sum_{ y,z \in \mathbb{F}_{p}}
e\left (
\frac{d\, f z(-f^2(y+1)^{3}+e^2 y^{3}+1)}{p}
\right).
\end{multline}
}
\end{lemma}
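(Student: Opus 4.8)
The plan is to start from the defining formula \eqref{apeq} for $a_{p,t,b}$ and expand the cube $\sum_t a_{p,t,b}^3$ as a triple sum over three copies of the Legendre-to-Gauss-sum identity \eqref{LeGaueq}, exactly as was done for the second moments in the proofs of Theorems \ref{t1} and \ref{t2}. This produces a factor $1/G_p^3$ together with a sum over $d_1,d_2,d_3\in\{1,\dots,p-1\}$ of $\left(\frac{d_1d_2d_3}{p}\right)$, a sum over $x_1,x_2,x_3\in\mathbb{F}_p$ of $e\!\left(\frac{d_1(x_1^3+tx_1+b)+\dots}{p}\right)$ wait—no, the coefficient being summed is $t$, so the linear-in-$t$ exponential isolates a congruence. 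Summing over $t$ forces $d_1x_1+d_2x_2+d_3x_3\equiv 0$, collapsing one variable and leaving a factor of $p$.

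After this first collapse I would rescale. Setting $e=d_2/d_1$ and $f=d_3/d_1$ (and using that $x\mapsto x^3$ is a bijection on $\mathbb{F}_p$ when $p\equiv 5\pmod 6$ to absorb the $d_1$-dependence into the cubing variables) should bring the expression into a shape where the inner object is precisely the left-hand side of \eqref{l1eq}. Concretely, eliminating $x_1$ via $x_1=-(e y+f z)$ after the substitution $x_2=d_1^{?}y$, $x_3=d_1^{?}z$, and tracking the Legendre symbols, one expects the character $\left(\frac{ef(1+e+f)}{p}\right)$ and the exponential $e\!\left(\frac{d(-(ey+fz)^3+ey^3+fz^3)}{p}\right)$ to emerge — which is exactly the summand in \eqref{l1eq}. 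The purpose of Lemma \ref{l1} is then to rewrite that sum: its right-hand side splits off an explicit closed-form piece $-p(p-1)\left(1+\left(\frac{-1}{p}\right)\right)$ and replaces the remaining sum by a new one whose cubic argument $-f^2(y+1)^3+e^2y^3+1$ is more tractable (one variable, $z$, now appears only linearly, ready for another application of \eqref{legeq}).

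To prove Lemma \ref{l1} itself, I would work on its left-hand side and expand the cube $-(ey+fz)^3+ey^3+fz^3$ algebraically. The cross terms are $-3(ey+fz)(ey)(fz)$-type expressions; after expansion the argument factors. The natural move is to split the range of summation according to whether the emerging quadratic/cubic factors vanish: the locus where the inner exponential argument is identically zero in one of the summation variables contributes the explicit main term (this is where the $-p(p-1)(1+(\frac{-1}{p}))$ comes from, via a Gauss-sum or Jacobi-sum evaluation of $\sum_{e,f}\left(\frac{ef(1+e+f)}{p}\right)$ over the degenerate locus), while the complementary, generic part is reorganized by a change of variables $y\mapsto$ (something) that converts $ef(1+e+f)$ into $e+ef+f$ and pulls out the linear-in-$z$ factor $f z$ inside the new cubic. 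The main obstacle will be the bookkeeping in this case-split: correctly identifying the degenerate subvariety, evaluating the character sum on it in closed form, and verifying that the substitution on the generic part exactly reproduces the claimed right-hand side summand without sign or normalization errors. The cubing-bijectivity at $p\equiv 5\pmod 6$ is used repeatedly and must be invoked cleanly at each rescaling.
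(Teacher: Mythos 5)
Your first two paragraphs reconstruct how the left-hand side of \eqref{l1eq} arises from the third moment; that is the business of the proof of Theorem \ref{t3}, not of Lemma \ref{l1}, so only your final paragraph addresses the statement at hand, and there the key mechanism is missing. You propose to expand $-(ey+fz)^{3}+ey^{3}+fz^{3}$ and let it ``factor''; but the coefficients are $e\,y^{3}$ and $f\,z^{3}$, not $(ey)^{3}$ and $(fz)^{3}$, so this is not an instance of $(a+b)^{3}-a^{3}-b^{3}=3ab(a+b)$ and the expression does not factor. What the paper actually exploits is that this cubic is homogeneous of degree $3$ in $(y,z)$: one splits the sum into $z=0$ and $z\neq 0$; for $z\neq 0$ one substitutes $y\mapsto yz$ to pull a factor $z^{3}$ out of the exponential argument, replaces $z^{3}$ by $z$ (cubing is a bijection since $p\equiv 5\pmod 6$), then sets $e\mapsto ef$, and finally $y\mapsto ye^{-1}$, $e\mapsto e^{-1}$ to arrive at the summand $\left(\frac{e+ef+f}{p}\right)e\!\left(\frac{dfz(-f^{2}(y+1)^{3}+e^{2}y^{3}+1)}{p}\right)$. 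Without the homogeneity observation and this specific chain of substitutions the right-hand side of \eqref{l1eq} cannot be reached.

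Your account of the constant term is also not correct as stated. The quantity $-p(p-1)\left(1+\left(\frac{-1}{p}\right)\right)$ is not a Jacobi-sum evaluation of $\sum_{e,f}\left(\frac{ef(1+e+f)}{p}\right)$ over a degenerate locus; it is the combination of two boundary contributions: the $z=0$ piece of the original sum, which evaluates (via cubing bijectivity, \eqref{legeq} and \eqref{legsym}) to $p(p-1)\left(-1+(p-1)\left(\frac{-1}{p}\right)\right)$, minus the term $p^{2}(p-1)\left(\frac{-1}{p}\right)$ that must be subtracted when the transformed $z\neq 0$ sum is extended back over all $z\in\mathbb{F}_{p}$ so that it matches the sum on the right of \eqref{l1eq}. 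That extend-and-correct step is essential and absent from your plan. In short, your general strategy (evaluate a degenerate piece in closed form, transform the generic piece) points in the right direction, but the concrete steps are either unspecified or, in the case of the claimed factorization, would fail.
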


\begin{proof}
If $z=0$, the left side of \eqref{l1eq} becomes
{\allowdisplaybreaks
\begin{align*}
S_{0}:&=\sum_{d, e, f =1}^{p-1}
\left (
\frac{ef(1+e+f)}{p}
\right)\sum_{y \in \mathbb{F}_{p}}
e\left (
\frac{dy^3e(1-e^2)}{p}
\right)\\
&=(p-1)\sum_{e, f =1}^{p-1}
\left (
\frac{ef(1+e+f)}{p}
\right)\sum_{y \in \mathbb{F}_{p}}
e\left (
\frac{ye(1-e^2)}{p}
\right)\\
&=p(p-1)
\left (\sum_{f =1}^{p-1}
\left (
\frac{f(2+f)}{p}
\right)
+\sum_{f =1}^{p-1}
\left (
\frac{-f^2}{p}
\right)
\right)\\
&=p(p-1)
\left (\sum_{f =1}^{p-1}
\left (
\frac{2f^{-1}+1}{p}
\right)
+
\sum_{f =1}^{p-1}
\left (
\frac{-1}{p}
\right)
\right)\\
&=p(p-1)
\left (
-1
+
(p-1)\left (
\frac{-1}{p}
\right)
\right).
\end{align*}
}
The second equality follows since
$\{y^3:y \in \mathbb{F}_{p}\}=\{y:y \in \mathbb{F}_{p}\}$ for the primes
$p$ being considered, the third equality follows from \eqref{legeq} and the last
equality follows  from \eqref{legsym}.

If $z \not = 0$, then the left side of \eqref{l1eq} equals
{\allowdisplaybreaks
\begin{multline}\label{l1eq2}
S_{1}:=\sum_{d, e, f ,z=1}^{p-1}
\left (
\frac{ef(1+e+f)}{p}
\right)\sum_{ y \in \mathbb{F}_{p}}
e\left (
\frac{d(-(ey+fz)^{3}+e y^{3}+fz^{3})}{p}
\right)=\\
\sum_{d, e, f ,z=1}^{p-1}
\left (
\frac{ef(1+e+f)}{p}
\right)
\sum_{ y \in \mathbb{F}_{p}}
e\left (
\frac{dz^3(-(eyz^{-1}+f)^{3}+e (yz^{-1})^{3}+f)}{p}
\right).
\end{multline}
}
Now replace $y$ by $yz$ and then $z^{3}$ by $z$ (justified by the same argument as above)
 and finally $e$ by $ef$ to get this last sum equals
{\allowdisplaybreaks
\begin{equation*}
\sum_{d, e, f ,z=1}^{p-1}
\left (
\frac{e(1+ef+f)}{p}
\right)
\times
\sum_{ y \in \mathbb{F}_{p}}
e\left (
\frac{dfz(-f^2(ey+1)^{3}+e y^{3}+1)}{p}
\right).
\end{equation*}
}
We wish to extend the last sum to include $z=0$. If we set $z=0$ on
the right side  of the last equation (and denote the resulting sum by "r.s.")
and sum over $d$ and $y$ we get that
{\allowdisplaybreaks
\begin{align*}
r.s.&=p(p-1)
\sum_{ e, f=1 }^{p-1}
\left (
\frac{e(1+f(e+1))}{p}
\right)\\
&=p(p-1)
\left (
\sum_{  f=1 }^{p-1}
\left (
\frac{-1}{p}
\right)
+
\sum_{  e=1}^{p-2}
\sum_{  f =1}^{p-1}
\left (
\frac{e(1+f(e+1))}{p}
\right)
\right ).
\end{align*}
}
Replace $f$ by $f(e+1)^{-1}$ in the second sum above and then
{\allowdisplaybreaks
\begin{align*}
r.s.&=p(p-1)
\left (
(p-1)
\left (
\frac{-1}{p}
\right)
+
\sum_{  e=1}^{p-2}
\sum_{  f =1}^{p-1}
\left (
\frac{e(1+f)}{p}
\right)
\right )\\
&=p(p-1)
\left (
(p-1)
\left (
\frac{-1}{p}
\right)
+
\sum_{  e=1}^{p-2}
\left (
\frac{e}{p}
\right)
\sum_{  f =1}^{p-1}
\left (
\frac{1+f}{p}
\right)
\right )\\
&=p(p-1)
\left (
(p-1)
\left (
\frac{-1}{p}
\right)
+
\left (-
\left (
\frac{-1}{p}
\right)
\right )
(-1)\right )\\
&=p^{2}(p-1)
\left (
\frac{-1}{p}
\right)
.
\end{align*}
}
It follows that the left side of \eqref{l1eq2} equals
{\allowdisplaybreaks
\begin{multline*}
-p^{2}(p-1)
\left (
\frac{-1}{p}
\right)\\
+ \sum_{d, e, f =1}^{p-1} \left ( \frac{e(1+ef+f)}{p} \right)
\sum_{ y,z \in \mathbb{F}_{p}} e\left ( \frac{d f z (-f^2(e
y+1)^{3}+e y^{3}+1)}{p} \right),
\end{multline*}
}
and thus that the left side of \eqref{l1eq} equals
{\allowdisplaybreaks
\begin{align}\label{seq2}
S_{0}+S_{1}
&=-p(p-1)
\left (
1+\left (
\frac{-1}{p}
\right)
\right)
+\sum_{d, e, f =1}^{p-1}
\left (
\frac{e(1+ef+f)}{p}
\right)\\
&\phantom{sasadadasdsasadada}
\times
\sum_{ y,z \in \mathbb{F}_{p}}
e\left (
\frac{dfz(-f^2(ey+1)^{3}+e y^{3}+1)}{p}
\right)\notag  \\
&=-p(p-1)
\left (
1+\left (
\frac{-1}{p}
\right)
\right)
+\sum_{d, e, f =1}^{p-1}
\left (
\frac{e+ef+f}{p}
\right)\notag \\
&\phantom{sasadadasdsasadada}
\times
\sum_{ y,z \in \mathbb{F}_{p}}
e\left (
\frac{dfz(-f^2(y+1)^{3}+e^2 y^{3}+1)}{p}
\right).
\notag
\end{align}
}
The second equality in \eqref{seq2}
follows upon replacing $y$ by $ye^{-1}$ and then
$e$ by $e^{-1}$.
\end{proof}

\begin{lemma}\label{l1in}
Let  $p \equiv 5$ $($mod 6$)$ be prime. Then
{\allowdisplaybreaks
\begin{align}\label{s*eq}
S^{*}&:=\sum_{d, e, f =1}^{p-1}
\left (
\frac{e+ef+f}{p}
\right)
\sum_{ y,z \in \mathbb{F}_{p}}
e\left (
\frac{dfz(-f^2(y+1)^{3}+e^2 y^{3}+1)}{p}
\right)\\
&=2p(p-1)
\left (
 -1+(p-1)\left (
\frac{-1}{p}
\right)
\right)
-3p(p-1)
\left (
\frac{-2}{p}
\right)
+
p(p-1)S^{**}, \notag
\end{align}
}
 where
{\allowdisplaybreaks
\begin{equation*}
S^{**}:=\sum_{e,\,f=2,\, \,e^{2} \not = f^{2}}^{p-2}
\left (
\frac{(1+e)(e-f)(1+e-f)(-1+f)}{p}
\right).
\end{equation*}
}

\end{lemma}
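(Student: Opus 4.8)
The plan is to begin by collapsing the two innermost linear sums. In the exponential defining $S^{*}$ the variable $z$ occurs only as a linear factor, so by \eqref{legeq} the sum over $z$ equals $p$ when $df\bigl(-f^{2}(y+1)^{3}+e^{2}y^{3}+1\bigr)\equiv 0\pmod p$ and $0$ otherwise; since $1\le d,f\le p-1$ this is just the condition $-f^{2}(y+1)^{3}+e^{2}y^{3}+1\equiv 0\pmod p$. The surviving summand no longer depends on $d$, so the sum over $d$ contributes a factor $p-1$. Writing $N(e,f)=\#\{y\in\mathbb{F}_{p}:e^{2}y^{3}+1=f^{2}(y+1)^{3}\}$, this gives
\[
S^{*}=p(p-1)\sum_{e,f=1}^{p-1}\left(\frac{e+ef+f}{p}\right)N(e,f),
\]
so everything reduces to evaluating the weighted root count $T:=\sum_{e,f}\left(\frac{e+ef+f}{p}\right)N(e,f)$.

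Next I would interchange summation, writing $N(e,f)$ as the sum over $y$ of the indicator of $e^{2}y^{3}+1=f^{2}(y+1)^{3}$ and moving the sum over $y$ outside, so that
\[
T=\sum_{y\in\mathbb{F}_{p}}\ \sum_{\substack{e,f\in\mathbb{F}_{p}^{*}\\ y^{3}e^{2}-(y+1)^{3}f^{2}=-1}}\left(\frac{e+ef+f}{p}\right).
\]
The point is that for each fixed $y$ the constraint is a diagonal conic in $(e,f)$. I would first peel off the two fibres $y=0$ and $y=-1$. For $y=0$ the constraint forces $f^{2}=1$, i.e. $f=\pm 1$ with $e$ free, and the two resulting one-variable sums $\sum_{e}\left(\frac{2e+1}{p}\right)=-1$ and $\sum_{e}\left(\frac{-1}{p}\right)=(p-1)\left(\frac{-1}{p}\right)$ are immediate from \eqref{legsym}. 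For $y=-1$ the constraint forces $e=\pm1$, and the substitution $y\mapsto -1-y$ (together with the symmetry of the weight in $e,f$) shows this fibre contributes exactly the same. Hence the two degenerate fibres together yield $2\left(-1+(p-1)\left(\frac{-1}{p}\right)\right)$, which is the first term in the claimed formula for $T=S^{*}/\bigl(p(p-1)\bigr)$.

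For the remaining fibres $y\neq 0,-1$ the conic $y^{3}e^{2}-(y+1)^{3}f^{2}=-1$ is nondegenerate, and the plan is to evaluate $\Sigma(y):=\sum_{e,f\in\mathbb{F}_{p}^{*},\,\mathrm{conic}}\left(\frac{e+ef+f}{p}\right)$ with the paper's standard tools: replace the Legendre symbol of the weight by its Gauss-sum expansion \eqref{LeGaueq}, detect the conic using \eqref{legeq}, and carry out the inner Gaussian sum in $(e,f)$ by completing the square, exactly as in the quadratic-form count \eqref{qfeq}. Summing the outcome over $y$---where I would use that $x\mapsto x^{3}$ is a bijection of $\mathbb{F}_{p}$ because $p\equiv 5\pmod 6$---and renaming the surviving dual variables $e,f$ is what I expect to produce the double character sum $S^{**}$ over the stated range $e,f\in\{2,\dots,p-2\}$, $e^{2}\neq f^{2}$; the contributions excluded from that range (points with $e+ef+f\equiv 0$, points with $e$ or $f$ leaving $\mathbb{F}_{p}^{*}$, and the degenerate directions $e^{2}=f^{2}$) are what should collapse, via \eqref{legsym} and Theorem \ref{tqr}, into the constant $-3\left(\frac{-2}{p}\right)$.

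The hard part will be this last step. The conceptual reductions above are routine, but organizing the conic character sum so that its principal part is \emph{literally} $S^{**}$, and---above all---tracking every boundary and degeneracy correction (the excluded residues $e,f\in\{0,\pm1\}$, the locus $e^{2}=f^{2}$ where the conic degenerates, and the points where the quadratic weight vanishes) so that these corrections sum to precisely $-3\left(\frac{-2}{p}\right)$ rather than to some neighbouring expression, is where the genuine bookkeeping and the real risk of sign or counting errors lie.
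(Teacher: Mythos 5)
Your opening reductions are correct and coincide with the paper's: collapsing the $z$-sum via \eqref{legeq} (the condition is $-f^{2}(y+1)^{3}+e^{2}y^{3}+1\equiv 0$ since $d,f\not\equiv 0$), pulling the factor $p-1$ out of the $d$-sum, and the two degenerate fibres $y=0$ and $y=-1$, whose contributions $-1+(p-1)\left(\frac{-1}{p}\right)$ each you compute correctly. The gap is the entire main case. For $y\neq 0,-1$ you propose to evaluate, for each fixed $y$, the sum $\Sigma(y)=\sum\left(\frac{e+ef+f}{p}\right)$ over the conic $y^{3}e^{2}-(y+1)^{3}f^{2}=-1$ and then sum over $y$. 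This step is not carried out, and it is not the routine completion you suggest: the weight $e+ef+f$ is not a quadratic form aligned with the conic, so after expanding it by \eqref{LeGaueq} and detecting the conic by \eqref{legeq} you are left with a three-fold sum over $d_{1}$, $d_{2}$ and $y$ of Gauss-sum data whose evaluation is as hard as the original problem; for an individual $y$ the quantity $\Sigma(y)$ is a complete character sum over a curve, generically of size $O(\sqrt{p})$ with no closed form, so there is no fibre-by-fibre formula to be summed. Nothing in the sketch forces the answer into the specific shape $p(p-1)S^{**}-3p(p-1)\left(\frac{-2}{p}\right)$, and the claim that the various boundary corrections collapse to $-3\left(\frac{-2}{p}\right)$ is asserted, not proved.

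The idea you are missing is the paper's $y$-dependent rescaling $e\mapsto ey^{-1}$, $f\mapsto f(y+1)^{-1}$, legitimate precisely because $y\neq 0,-1$. Under this substitution the weight becomes $\left(\frac{y(y+1)}{p}\right)\left(\frac{(e+f)y+e(1+f)}{p}\right)$ and, more importantly, the vanishing condition becomes $(e^{2}-f^{2})y+1-f^{2}=0$, which is \emph{linear} in $y$. Hence for each pair $(e,f)$ with $e^{2}\neq f^{2}$ there is exactly one admissible $y$, namely $y=(f^{2}-1)/(e^{2}-f^{2})$, and substituting this value into the weight produces literally the quartic character summand of $S^{**}$, with the exclusions $e^{2}\neq 1$, $f^{2}\neq 1$ forced by $y\neq 0,-1$. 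The degenerate directions $e^{2}=f^{2}$ force $e^{2}=f^{2}=1$, and the four sign choices give the short sum \eqref{e2eqf2}, which evaluates to $-3p(p-1)\left(\frac{-2}{p}\right)$. Without this linearization your decomposition into fibres over $y$ leaves the main term unevaluated, so the proof is incomplete at its central step.
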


\begin{proof}
Upon changing the order of summation slightly, we get that
\begin{align*}
S^{*}&=\sum_{e, f =1}^{p-1}
\left (
\frac{e+ef+f}{p}
\right)\sum_{d =1}^{p-1}
\sum_{ y,z \in \mathbb{F}_{p}}
e\left (
\frac{dfz(-f^2(y+1)^{3}+e^2 y^{3}+1)}{p}
\right)
\end{align*}
If $y=0$, the inner double sum over $d$ and $z$ is zero, unless $f=\pm1$,
if which case it equals $p(p-1)$ and the right side of \eqref{s*eq} equals
{\allowdisplaybreaks
\[
p(p-1)
\left (
\sum_{ e=1}^{p-1}
\left (
\frac{2e+1}{p}
\right)
+
\sum_{ e=1}^{p-1}
\left (
\frac{-1}{p}
\right)
\right)
=p(p-1)
\left (
 -1+(p-1)\left (
\frac{-1}{p}
\right)
\right).
\]
}
By similar reasoning, if $y=-1$, the right side of \eqref{s*eq} also equals
{\allowdisplaybreaks
\[
p(p-1)
\left (
 -1+(p-1)\left (
\frac{-1}{p}
\right)
\right).
\]
}
Thus
{\allowdisplaybreaks
\begin{align}\label{s*eq2}
S^{*}&=2p(p-1)
\left (
 -1+(p-1)\left (
\frac{-1}{p}
\right)
\right)\\
&+
\sum_{y =1}^{p-2}
\sum_{ e, f =1}^{p-1}
\left (
\frac{e+ef+f}{p}
\right)
\sum_{d =1}^{p-1}
\sum_{ z \in \mathbb{F}_{p}}
e\left (
\frac{dfz(-f^2(y+1)^{3}+e^2 y^{3}+1)}{p}
\right) \notag
\\&=2p(p-1)
\left (
 -1+(p-1)\left (
\frac{-1}{p}
\right)
\right)
+
\sum_{y =1}^{p-2}
\left (
\frac{y(y+1)}{p}
\right)\notag
\\
&\times
\sum_{e, f =1}^{p-1}
\left (
\frac{(e+f)y+e(1+f)}{p}
\right)
\sum_{d =1}^{p-1}
\sum_{ z \in \mathbb{F}_{p}}
e\left (
\frac{dfz((e^{2}-f^2)y+1-f^{2})}{p}
\right),\notag
\end{align}
}
where the last equality follows upon replacing $f$ by $f(y+1)^{-1}$
and $e$ by $ey^{-1}$. The inner sum over $d$ and $z$ is zero unless
\[
(e^{2}-f^2)y+1-f^{2}=0,
\]
in which case the inner sum is $p(p-1)$. We distinguish the cases
$e^{2}=f^{2}$ and $e^{2} \not =f^{2}$. If $e^{2}=f^{2}$, then necessarily
$e^{2}=f^{2}=1$ and the sum on the right side of \eqref{s*eq2} becomes
\begin{multline}\label{e2eqf2}
p(p-1)\sum_{y =1}^{p-2}
\left (
\frac{y(y+1)}{p}
\right)
\left(
\left (
\frac{2(y+1)}{p}
\right)
+
\left (
\frac{0}{p}
\right)
+
\left (
\frac{-2}{p}
\right)
+
\left (
\frac{-2y}{p}
\right)
\right)\\
=-3p(p-1)
\left (
\frac{-2}{p}
\right).
\end{multline}
If $e^{2} \not =f^{2}$ then
\[
y=\frac{f^{2}-1}{e^{2}-f^{2}},
\]
and since $y \not =0, -1$, we exclude $f^{2}=1$ and $e^{2}=1$. After substituting
for $y$ in the sum in the final expression in \eqref{s*eq2}, we find that
\begin{equation}\label{se*eql}
S^{*}=2p(p-1)
\left (
 -1+(p-1)\left (
\frac{-1}{p}
\right)
\right)
-3p(p-1)
\left (
\frac{-2}{p}
\right)
+
p(p-1)S^{**},
\end{equation} where
{\allowdisplaybreaks
\begin{equation}\label{s**eq}
S^{**}:=\sum_{e,\,f=2,\, \,e^{2} \not = f^{2}}^{p-2}
\left (
\frac{(1+e)(e-f)(1+e-f)(-1+f)}{p}
\right).
\end{equation}
}

\end{proof}

\begin{lemma}\label{l2}
Let  $p \equiv 5$ $($mod 6$)$ be prime and let $S^{**}$ be as defined in Lemma \ref{l1in}. Then
{\allowdisplaybreaks
\begin{multline*}
%\label{s**eq}
S^{**}=\sum_{e=0}^{p-1}
\sum_{f=0}^{p-1}
\left (
\frac{(1+e)(e-f)(1+e-f)(-1+f)}{p}
\right)\\
+2\left (
\frac{-6}{p}
\right)
+3\left (
\frac{-2}{p}
\right)
+
3\left (
\frac{-1}{p}
\right)
+2.
\end{multline*}
}
\end{lemma}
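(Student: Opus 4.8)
The plan is to pass from the restricted sum $S^{**}$, in which $e$ and $f$ run over $\{2,\dots,p-2\}$ subject to $e^{2}\neq f^{2}$, to the unrestricted double sum over all of $\mathbb{F}_{p}\times\mathbb{F}_{p}$, and to account for the difference by a finite collection of correction terms. Writing $L(e,f)=\left(\frac{(1+e)(e-f)(1+e-f)(-1+f)}{p}\right)$, I would first observe that the complement of $\{2,\dots,p-2\}$ inside $\{0,1,\dots,p-1\}$ is exactly $\{0,1,p-1\}$. Thus the full index set splits as the interior square $\{2,\dots,p-2\}^{2}$ together with the boundary set $B$ of pairs having $e\in\{0,1,-1\}$ or $f\in\{0,1,-1\}$, while the interior square splits further into the part with $e^{2}\neq f^{2}$ (which is $S^{**}$) and the diagonal part with $e^{2}=f^{2}$. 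Hence $S^{**}=\sum_{e,f\in\mathbb{F}_{p}}L(e,f)-\mathcal{A}-\mathcal{B}$, where $\mathcal{A}$ is the sum over the interior diagonal and $\mathcal{B}$ is the sum over the boundary set, and the whole task reduces to evaluating $\mathcal{A}$ and $\mathcal{B}$.

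For the diagonal part $\mathcal{A}$, the condition $e^{2}=f^{2}$ splits into $e=f$ and $e=-f$. When $e=f$ the factor $e-f$ vanishes, so those terms are $0$. When $e=-f$ (with $e\in\{2,\dots,p-2\}$, so automatically $e\neq-1$), a short computation collapses the four factors into $-(1+e)^{2}\,2e\,(1+2e)$, whence $L(e,-e)=\left(\frac{-1}{p}\right)\left(\frac{2e(1+2e)}{p}\right)$. Substituting $u=2e$ turns $\sum_{e}\left(\frac{2e(1+2e)}{p}\right)$ into $\sum_{u}\left(\frac{u(u+1)}{p}\right)$, which over all of $\mathbb{F}_{p}$ equals $-1$; removing the three excluded values $e=0,1,-1$ then gives $\mathcal{A}$ in closed form, involving $\left(\frac{-1}{p}\right)$, $\left(\frac{-2}{p}\right)$ and $\left(\frac{-6}{p}\right)$. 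For the boundary part I would use inclusion--exclusion, $\mathcal{B}=\sum_{e\in\{0,1,-1\}}\sum_{f}L+\sum_{f\in\{0,1,-1\}}\sum_{e}L-\sum_{e,f\in\{0,1,-1\}}L$. Each single-variable sum is easy, because fixing $e$ or $f$ at a special value forces one factor to become a perfect square (for instance $(f-1)^{2}$ when $e=0$, and $(1+e)^{2}$ when $f=0$), reducing the inner sum to $\sum\left(\frac{\text{linear}}{p}\right)=0$ minus one boundary term; moreover the row $e=-1$ and the column $f=1$ vanish identically, since $1+e$ and $-1+f$ are then $0$. The $3\times3$ overlap $\sum_{e,f\in\{0,1,-1\}}L$ I would tabulate directly, noting that most of its nine entries vanish and the survivors contribute $\left(\frac{-1}{p}\right)$, $\left(\frac{-1}{p}\right)$ and $\left(\frac{-6}{p}\right)$.

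Finally I would assemble $-\mathcal{A}-\mathcal{B}$ and verify that it equals $2\left(\frac{-6}{p}\right)+3\left(\frac{-2}{p}\right)+3\left(\frac{-1}{p}\right)+2$, the claimed correction. I expect the only genuine difficulty to be the bookkeeping: keeping the inclusion--exclusion honest so that no boundary pair is counted twice, and consistently remembering that whenever one of the four factors is $0$ the corresponding Legendre symbol is $0$, so that the perfect-square simplifications (valid away from the zero locus) do not secretly introduce spurious terms. Everything else reduces to repeated use of the elementary identities $\sum_{x}\left(\frac{x}{p}\right)=0$ and $\sum_{x}\left(\frac{x(x+c)}{p}\right)=-1$ for $c\neq0$.
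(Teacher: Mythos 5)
Your proposal is correct and follows essentially the same route as the paper: both pass from the restricted sum to the full double sum over $\mathbb{F}_p\times\mathbb{F}_p$ and evaluate the difference via the same elementary Legendre-symbol sums (the diagonal $e=-f$ contributing $-\left(\frac{-1}{p}\right)-\left(\frac{-2}{p}\right)-\left(\frac{-6}{p}\right)$, the rows $e=0,1$ and columns $f=0,-1$, and the vanishing of the $e=f$, $e=-1$, $f=1$ loci). The only difference is organizational --- the paper appends the missing strips one at a time, while you handle the boundary in one step by inclusion--exclusion with a $3\times 3$ overlap table --- and your stated corrections assemble to the same total $2\left(\frac{-6}{p}\right)+3\left(\frac{-2}{p}\right)+3\left(\frac{-1}{p}\right)+2$.
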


\begin{proof}
Clearly we
can remove the restrictions $f\not =e$, $f \not =1$ and $e \not = -1$
freely. If we set $f=-e$, we have that
\begin{align*}
\sum_{e,\,f=2,\, \,e=- f}^{p-2}&
\left (
\frac{(1+e)(e-f)(1+e-f)(-1+f)}{p}
\right)
=\sum_{e=2}^{p-2}
\left (
\frac{-2e(1+2e)}{p}
\right)\\
&=-\left (
\left (
\frac{-6}{p}
\right)
+\left (
\frac{-2}{p}
\right)
+
\left (
\frac{-1}{p}
\right)
\right ).
\end{align*}
The last equality follows from \eqref{legsym}. Thus
{\allowdisplaybreaks
\begin{multline*}
S^{**}=\sum_{e,\,f=2}^{p-2}
\left (
\frac{(1+e)(e-f)(1+e-f)(-1+f)}{p}
\right)\\
+\left (
\frac{-6}{p}
\right)
+\left (
\frac{-2}{p}
\right)
+
\left (
\frac{-1}{p}
\right).
\end{multline*}
}
If $f$ is set equal to 0 in the sum above we get
\[
\sum_{e=2}^{p-2}
\left (
\frac{-e}{p}
\right)
=-1 -\left (
\frac{-1}{p}
\right).
\]
If $f$ is set equal to -1 in this sum  we get
\begin{multline*}
\sum_{e=2}^{p-2}
\left (
\frac{-2(2+e)}{p}
\right)
=-\left (
\left (
\frac{-4}{p}
\right)
+
\left (
\frac{-2}{p}
\right)
+
\left (
\frac{-6}{p}
\right)
\right)\\
=-\left (
\left (
\frac{-1}{p}
\right)
+
\left (
\frac{-2}{p}
\right)
+
\left (
\frac{-6}{p}
\right)
\right).
\end{multline*}
Thus
{\allowdisplaybreaks
\begin{multline*}
S^{**}=\sum_{e=2}^{p-2}
\sum_{f=0}^{p-1}
\left (
\frac{(1+e)(e-f)(1+e-f)(-1+f)}{p}
\right)\\
+2\left (
\left (
\frac{-6}{p}
\right)
+\left (
\frac{-2}{p}
\right)
+
\left (
\frac{-1}{p}
\right)
\right)
+1 +\left (
\frac{-1}{p}
\right).
\end{multline*}
}
If we set $e=0$ in this latest sum we get
\[
\sum_{f=0}^{p-1}
\left (
\frac{-f(1-f)(-1+f)}{p}
\right)
=
\sum_{f=0, f \not = 1}^{p-1}
\left (
\frac{f}{p}
\right)=-1.
\]
If we set $e=1$ in this  sum we get
\[
\sum_{f=0}^{p-1}
\left (
\frac{2(1-f)(2-f)(-1+f)}{p}
\right)
=
\sum_{f=0, f \not = 1}^{p-1}
\left (
\frac{-2(2-f)}{p}
\right)=-\left (
\frac{-2}{p}
\right).
\]
Thus
{\allowdisplaybreaks
\begin{multline*}
S^{**}=\sum_{e=0}^{p-1}
\sum_{f=0}^{p-1}
\left (
\frac{(1+e)(e-f)(1+e-f)(-1+f)}{p}
\right)\\
+2\left (
\frac{-6}{p}
\right)
+3\left (
\frac{-2}{p}
\right)
+
3\left (
\frac{-1}{p}
\right)
+2.
\end{multline*}
}
\end{proof}

\begin{lemma}\label{l3}
Let $p \equiv 5 ($ mod $6)$ be prime. Then
\begin{equation*}
\sum_{e=0}^{p-1}
\sum_{f=0}^{p-1}
\left (
\frac{(1+e)(e-f)(1+e-f)(-1+f)}{p}
\right)\\=p\left (
\frac{2}{p}
\right)
+1.
\end{equation*}
\end{lemma}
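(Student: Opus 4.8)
The plan is to simplify the quadruple product of linear forms by an affine change of variables, after which the double sum separates into a product of two elementary quadratic character sums that are easy to evaluate.

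First I would substitute $a = 1+e$ and $b = f-1$; as $(e,f)$ runs over $\mathbb{F}_p^2$ so does $(a,b)$, so this is merely a reindexing. A direct computation gives
\[
(1+e)(e-f)(1+e-f)(-1+f) = a\,b\,(a-b-1)(a-b-2),
\]
so the sum in question equals $\sum_{a,b \in \mathbb{F}_p} \left(\frac{a\,b\,(a-b-1)(a-b-2)}{p}\right)$. The crucial observation is that two of the four factors now depend only on the difference $a-b$, while the other two are $a$ and $b$ separately.

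Next I would set $c = a-b$ and sum over $(b,c)$ instead of $(a,b)$ (again a bijection of $\mathbb{F}_p^2$). Since $(a-b-1)(a-b-2) = (c-1)(c-2)$ is independent of $b$, and $ab = b(b+c)$, the double sum factors as
\[
\sum_{c \in \mathbb{F}_p}\left(\frac{(c-1)(c-2)}{p}\right)\sum_{b \in \mathbb{F}_p}\left(\frac{b(b+c)}{p}\right).
\]
Then I would invoke the standard evaluation of the quadratic character sum $\sum_{x}\left(\frac{(x-r)(x-s)}{p}\right)$, which equals $-1$ when $r \neq s$ and $p-1$ when $r = s$ (this follows by completing the square, or from \eqref{qfeq} with $n=2$). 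The inner sum $\sum_b \left(\frac{b(b+c)}{p}\right)$ has roots $0$ and $-c$, hence equals $p-1$ if $c=0$ and $-1$ otherwise.

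Finally I would isolate the term $c=0$, where the outer factor is $\left(\frac{(0-1)(0-2)}{p}\right) = \left(\frac{2}{p}\right)$, giving a contribution $(p-1)\left(\frac{2}{p}\right)$. For the remaining terms $c \neq 0$ the inner sum is $-1$, so their contribution is $-\sum_{c\neq 0}\left(\frac{(c-1)(c-2)}{p}\right)$; using the same quadratic-sum formula (distinct roots $1,2$) one has $\sum_{c\in\mathbb{F}_p}\left(\frac{(c-1)(c-2)}{p}\right) = -1$, so this piece equals $1 + \left(\frac{2}{p}\right)$. Adding the two contributions collapses the whole sum to $(p-1)\left(\frac{2}{p}\right) + 1 + \left(\frac{2}{p}\right) = p\left(\frac{2}{p}\right) + 1$, as claimed. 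There is essentially no hard step here: the only thing to spot is the substitution $a=1+e$, $b=f-1$ that exposes the dependence of two factors on $a-b$; after that the computation is forced. I note in passing that the hypothesis $p \equiv 5$ (mod $6$) plays no role in this particular identity, which holds for every odd prime $p \geq 3$.
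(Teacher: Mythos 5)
Your proof is correct and follows essentially the same route as the paper's: an affine change of variables that separates the four linear factors so that the double sum factors into a product of two single-variable quadratic character sums, each of which reduces to the standard evaluation $\sum_{x}\left(\frac{(x-r)(x-s)}{p}\right)=-1$ for $r\neq s$ and $p-1$ for $r=s$. The only cosmetic difference is that the paper re-derives that evaluation via Gauss sums rather than quoting it, and your remark that the hypothesis $p\equiv 5 \ (\mathrm{mod}\ 6)$ is not actually used in this lemma is accurate.
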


\begin{proof}
If $f$ is replaced by $f+1$ and then $e$ is replaced by $e+f$, the
value of the double sum above does not change. Thus
{\allowdisplaybreaks
\begin{align}\label{nfs}
\sum_{e=0}^{p-1}
\sum_{f=0}^{p-1}&
\left (
\frac{(1+e)(e-f)(1+e-f)(-1+f)}{p}
\right)\\
& =
\sum_{e=0}^{p-1}
\sum_{f=0}^{p-1}
\left (
\frac{(1+e)(e-f-1)(e-f)f}{p}
\right) \notag \\
& =
\sum_{e=0}^{p-1}
\sum_{f=0}^{p-1}
\left (
\frac{(1+e+f)(e-1)e f}{p}
\right) \notag \\
& =
\sum_{e=0}^{p-1}
\sum_{f=0}^{p-1}
\left (
\frac{e(e-1)}{p}
\right)
\sum_{f=0}^{p-1}
\left (
\frac{(1+e+f) f}{p}
\right).\notag
\end{align}
}
We evaluate the inner sum using \eqref{LeGaueq}.
{\allowdisplaybreaks
\begin{align*}
\sum_{f=0}^{p-1}&
\left (
\frac{(1+e+f) f}{p}
\right)
=\frac{1}{G_{p}^{2}}
\sum_{f=0}^{p-1}
\sum_{d_{1},d_{2}=1}^{p-1}
\left (
\frac{d_{1}d_{2}}{p}
\right)
e\left (
\frac{d_{1}f+d_{2}(1+e+f)}{p}
\right)\\
&=\frac{1}{G_{p}^{2}}
\sum_{d_{1},d_{2}=1}^{p-1}
\left (
\frac{d_{1}d_{2}}{p}
\right)
e\left (
\frac{d_{2}(1+e)}{p}
\right)
\sum_{f=0}^{p-1}
e\left (
\frac{f(d_{1}+d_{2})}{p}
\right)
\\
&=\frac{p}{G_{p}^{2}}
\sum_{d_{2}=1}^{p-1}
\left (
\frac{-1}{p}
\right)
e\left (
\frac{d_{2}(1+e)}{p}
\right)\\
&=\frac{p}{G_{p}^{2}}
\left (
\frac{-1}{p}
\right)
\sum_{d_{2}=1}^{p-1}
e\left (
\frac{d_{2}(1+e)}{p}
\right).
\end{align*}
}
The next-to-last equality follows since the sum over $f$ in the previous expression
is 0, unless $d_{1}=-d_{2}$, in which case this sum is $p$. The sum over
$d_{2}$ equals $p-1$ if $e=p-1$ and equals $-1$ otherwise. Hence the sum at \eqref{nfs}
equals
{\allowdisplaybreaks
\begin{align*}
\frac{p}{G_{p}^{2}}
\left (
\frac{-1}{p}
\right)
&\left ( \sum_{e=0}^{p-2}
\left (
\frac{e(e-1)}{p}
\right)
(-1) + (p-1)\left (
\frac{2}{p}
\right)
\right)\\
&=\frac{p}{G_{p}^{2}}
\left (
\frac{-1}{p}
\right)
\left (
\left (
\frac{2}{p}
\right)
+1 + (p-1)\left (
\frac{2}{p}
\right)
\right)\\
&=\frac{p}{G_{p}^{2}}
\left (
\frac{-1}{p}
\right)
\left (
p\left (
\frac{2}{p}
\right)
+1
\right)=p\left (
\frac{2}{p}
\right)
+1,
\end{align*}
}
the last equality following from the remark after \eqref{Gpeq}.
\end{proof}

\begin{corollary}\label{c1}
Let $S^{*}$ and $S^{**}$ be as defined in Lemma \ref{l1in}. Then
\begin{align*}
&(i)&
&S^{**}
=(p-2)\left (
\frac{2}{p}
\right)
+3\left (
\frac{-2}{p}
\right)
+
3\left (
\frac{-1}{p}
\right)
+3,\\
&(ii)& &S^{*}=p(p-1)\left (1+(2p+1)
\left (
\frac{-1}{p}
\right)
+(p-2)
\left (
\frac{2}{p}
\right)
\right).
\end{align*}
\end{corollary}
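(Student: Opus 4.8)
The plan is to prove part (i) by combining Lemma~\ref{l2} with Lemma~\ref{l3}, and then to deduce part (ii) by substituting (i) into the expression for $S^{*}$ supplied by Lemma~\ref{l1in}. Neither step requires any new summation; the entire content is a short Legendre-symbol reduction followed by bookkeeping.

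For (i), I would begin by inserting the evaluation of the full double sum from Lemma~\ref{l3}, namely $p\left(\frac{2}{p}\right)+1$, into the identity of Lemma~\ref{l2}. This gives
\[
S^{**}=p\left(\frac{2}{p}\right)+1+2\left(\frac{-6}{p}\right)+3\left(\frac{-2}{p}\right)+3\left(\frac{-1}{p}\right)+2.
\]
To recover the stated form I would rewrite the term $2\left(\frac{-6}{p}\right)$. Since $-6=(-3)\cdot 2$, multiplicativity of the Legendre symbol gives $\left(\frac{-6}{p}\right)=\left(\frac{-3}{p}\right)\left(\frac{2}{p}\right)$. The one genuine observation here is that $\left(\frac{-3}{p}\right)=-1$ for every prime $p\equiv 5\ (\mathrm{mod}\ 6)$: by Theorem~\ref{tqr} one finds $\left(\frac{-3}{p}\right)=\left(\frac{p}{3}\right)$, and since $p\equiv 5\ (\mathrm{mod}\ 6)$ forces $p\equiv 2\ (\mathrm{mod}\ 3)$, we get $\left(\frac{p}{3}\right)=\left(\frac{2}{3}\right)=-1$. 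Hence $2\left(\frac{-6}{p}\right)=-2\left(\frac{2}{p}\right)$, so the leading $p\left(\frac{2}{p}\right)$ collapses to $(p-2)\left(\frac{2}{p}\right)$ and the constants $1+2$ combine to $3$, yielding exactly the formula claimed in (i).

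For (ii), I would substitute the closed form from (i) directly into the expression
\[
S^{*}=2p(p-1)\left(-1+(p-1)\left(\frac{-1}{p}\right)\right)-3p(p-1)\left(\frac{-2}{p}\right)+p(p-1)S^{**}
\]
from Lemma~\ref{l1in}, factor out the common $p(p-1)$, and collect like terms. The two $\left(\frac{-2}{p}\right)$ contributions ($-3$ from the middle term and $+3$ from $S^{**}$) cancel; the bare constants $-2$ and $+3$ combine to $1$; and the two $\left(\frac{-1}{p}\right)$ contributions, $2(p-1)$ and $3$, merge into the coefficient $2p+1$. This produces the advertised $p(p-1)\!\left(1+(2p+1)\left(\frac{-1}{p}\right)+(p-2)\left(\frac{2}{p}\right)\right)$.

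The main (indeed essentially the only) obstacle is the sign determination $\left(\frac{-3}{p}\right)=-1$ under the hypothesis $p\equiv 5\ (\mathrm{mod}\ 6)$; once that is in hand, everything reduces to mechanical substitution and the cancellation of the $\left(\frac{-2}{p}\right)$ and constant terms. I would be careful to verify that no further quadratic symbols simplify unexpectedly, but I expect the computation to go through cleanly as described.
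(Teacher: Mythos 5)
Your proposal is correct and follows exactly the route the paper takes: the paper's own proof of this corollary is the one-line observation that Lemmas \ref{l2} and \ref{l3} together with $\left(\frac{-3}{p}\right)=-1$ for $p\equiv 5 \pmod 6$ give (i), and Lemma \ref{l1in} with (i) gives (ii). Your writeup merely fills in the same substitution and cancellation details, including the correct reciprocity argument for $\left(\frac{-3}{p}\right)=\left(\frac{p}{3}\right)=-1$.
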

\begin{proof}
Lemmas \ref{l2} and \ref{l3} and the fact that $(-3|p)=-1$ if $p \equiv 5 (\,mod \,6)$ give
(i). Lemma \ref{l1in} and part (i) give (ii).
\end{proof}

\textbf{Theorem \ref{t3}.}
\emph{Let $p \equiv 5$ $($mod 6$)$ be prime and let $b \in \mathbb{F}_{p}^{*}$. Then}
\begin{equation}\label{t3eq}
\sum_{t=0}^{p-1}a_{p,\,t,\,b}^{3}=
-p\left((p-2)\left(\frac{-2}{p}\right)
+2p\right)\left(\frac{b}{p}\right).
\end{equation}

\emph{Proof.}
Let $g$ be a generator of $\mathbb{F}_{p}^{*}$.
It is a simple matter to show, using \eqref{apeq}, that
\[
\sum_{t=0}^{p-1}a_{p,\,t,\,b}^{3} = -\sum_{t=0}^{p-1}a_{p,\,t,\,bg}^{3}.
\]
Thus the statement at \eqref{t3eq}
is equivalent to the statement
{\allowdisplaybreaks
\begin{equation}\label{t3eqa}
\sum_{b=1}^{p-1}\sum_{t=0}^{p-1}a_{p,\,t,\,b}^{3}\left(\frac{b}{p}\right)=
-p(p-1)\left((p-2)\left(\frac{-2}{p}\right)
+2p\right).
\end{equation}
}
Let $S$ denote the left side of \eqref{t3eqa}. From \eqref{apeq} and
\eqref{LeGaueq} it follows that
{\allowdisplaybreaks
\begin{align*}
S&=- \sum_{b=1}^{p-1}\sum_{t=0}^{p-1}
\sum_{x, y, z  \in \mathbb{F}_{p}}
\left (
\frac{x^{3}+t\,x+b}{p}
\right)
\left (
\frac{y^{3}+t\,y+b}{p}
\right)
\left (
\frac{z^{3}+t\,z+b}{p}
\right)
\left(\frac{b}{p}\right)\\
&=
-\frac{1}{G_{p}^{3}}\sum_{d, e, f =1}^{p-1}\left (
\frac{d e f}{p}
\right)
\sum_{x, y, z, t \in \mathbb{F}_{p}}
e\left (
\frac{d(x^{3}+t x)+e(y^{3}+ t y)+f(z^{3}+t z)}{p}
\right)\\
&\phantom{asdsdsdsddsadsadasdsdsdsddsadsad}\times
\sum_{b \in \mathbb{F}_{p}^{*}}
\left(\frac{b}{p}\right)
e\left (
\frac{b(d+e+f)}{p}
\right)\\
&=
-\frac{1}{G_{p}^{2}}\sum_{d, e, f =1}^{p-1}\left (
\frac{d e f}{p}
\right)
\left (
\frac{d+e+f}{p}
\right)\\
&\phantom{asdsdsdsddsads}
\times \sum_{x, y, z, t \in \mathbb{F}_{p}}
e\left (
\frac{d(x^{3}+t x)+e(y^{3}+ t y)+f(z^{3}+t z)}{p}
\right)\\
&=
-\frac{1}{G_{p}^{2}}\sum_{d, e, f =1}^{p-1}\left (
\frac{d e f}{p}
\right)
\left (
\frac{d+e+f}{p}
\right)\sum_{x, y, z \in \mathbb{F}_{p}}
e\left (
\frac{dx^{3}+ey^{3}+fz^{3}}{p}
\right)
\\
&\phantom{asdsdsdsddsadsasdsdsdsddsadsasdsdsdsdds}
\times
\sum_{ t \in \mathbb{F}_{p}}
e\left (
\frac{t(d x+e y+f z)}{p}
\right)
\end{align*}
}
The inner sum is zero, unless $d x+e y+f z=0$ in $\mathbb{F}_{p}$,
in which case it equals $p$. Upon letting $x=-d^{-1}(e y+f z)$, replacing
$e$ by $d e$ and $f$ by $f e$, we get that
%{\allowdisplaybreaks
\begin{align*}
%\label{seq1}
S&=-\frac{p}{G_{p}^{2}}\sum_{d, e, f =1}^{p-1}
\left (
\frac{ef(1+e+f)}{p}
\right)\sum_{ y, z \in \mathbb{F}_{p}}
e\left (
\frac{d(-(e y+fz)^{3}+e y^{3}+fz^{3})}{p}
\right)
\\
&=\frac{p^2(p-1)}{G_{p}^{2}}
\left (
1+\left (
\frac{-1}{p}
\right)
\right) \notag\\
&\phantom{ads}-\frac{p}{G_{p}^{2}}
\sum_{d, e, f =1}^{p-1}
\left (
\frac{e+ef+f}{p}
\right)
\sum_{ y,z \in \mathbb{F}_{p}}
e\left (
\frac{dfz(-f^2(y+1)^{3}+e^2 y^{3}+1)}{p}
\right) \notag \\
&=\frac{p^2(p-1)}{G_{p}^{2}}
\left (
1+\left (
\frac{-1}{p}
\right)
\right)-\frac{p}{G_{p}^{2}}S^{*} \notag\\
&=-\frac{p^{2}(p-1)}{G_{p}^{2}}
\left (
2p\left (
\frac{-1}{p}
\right)
+(p-2)
\left (
\frac{2}{p}
\right)
\right) \notag\\
&=-p(p-1)
\left (
2p+(p-2)
\left (
\frac{-2}{p}
\right)
\right),
\end{align*}
%}
which was what needed to be shown, by \eqref{t3eqa}.
The second equality above follows from Lemma \ref{l1}.
Above $S^{*}$ is as
defined in Lemma \ref{l1in} and in the next-to-last equality we used
Corollary \ref{c1}, part (ii). In the last equality we used once again the fact that
$p/G_{p}^{2}(-1|p) =1$.
\begin{flushright}
$\Box$
\end{flushright}

\section{Concluding Remarks}
Let $p \equiv 5$ $($mod 6$)$ be prime, $b \in \mathbb{F}_{p}^{*}$
and $k$ be an odd positive integer. Define
\[
f_{k}(p)=\sum_{t=0}^{p-1}a_{p,\,t,\,b}^{k}\left(\frac{b}{p}\right).
\]
(It is not difficult to show that the right side is independent of $b \in \mathbb{F}_{p}^{*}$)

By Theorem \ref{t3}
\[
f_{3}(p) =-p\left((p-2)\left(\frac{-2}{p}\right)
+2p\right).
\]
We have not been able to determine $f_{k}(p)$ for $k \geq 5$ (We do not consider
even $k$, since a formula for each even $k$ can be derived from Birch's work in
\cite{B67}). We conclude with a table of values of $f_{k}(p)$
and small primes $p \equiv 5$ $($mod 6$)$, with the hope of encouraging others to
work on this problem.

\begin{table}[!ht]
  \begin{center}
    \begin{tabular}{| c | c | c | c | c |}
    \hline
 $p \diagdown k$     &  5   &       7   &       9   &  11 \\ \hline
     &       &      &       &  \\
5     &     -275 &   -2315    & -20195      &-179195 \\
     &       &      &       &  \\
11   &  -10901     &-358061              &   -12030821    & -411625181 \\
     &        &     &       &  \\
17   &   -36737   & -1582913         &-68613377       &-3016710593  \\
     &       &      &       &  \\
23   &  8257  &  2763745   &304822657      &27903893665 \\
     &       &      &       &  \\
29  &   -35699   &-396299     & 184745341      &35260018501 \\
     &       &      &       &  \\
41   &  -654401   &-88683041             &-12260782721       &-1716248660321  \\
 %    &        &     &       &  \\
%47   &  -3121    &-9865343/47          & -790321      &231547622017/47  \\
%     &       &      &       &  \\
%53   &  -5671    &-51604235/53      &-201833911       & -2304828751355/53\\
%     &       &      &       &  \\
%59  &   -33871  & -401330765/59      &   -1403223871    &-17316574049405/59 \\
\hline
    \end{tabular}
\phantom{asdf}\\

\vspace{10pt}

    \caption{$f_{k}(p)$ for small primes $p \equiv 5$ $($mod 6$)$
and small odd $k$. }\label{Ta:t1}
    \end{center}
\end{table}

 \allowdisplaybreaks{

}
\end{document}